\newcommand{\lyxdot}{.}
\DeclareRobustCommand{\lyxdeleted}[3]{{\texorpdfstring{\color{lyxdeleted}\sout{#3}}{}}}
\numberwithin{equation}{section}
\numberwithin{figure}{section}
\numberwithin{table}{section}
\newcommand{\lyxaddress}[1]{
\par {\raggedright #1
\vspace{1.4em}
\noindent\par}
}
\theoremstyle{plain}
\newtheorem{thm}{\protect\theoremname}
  \theoremstyle{plain}
  \newtheorem{lem}[thm]{\protect\lemmaname}
  \theoremstyle{remark}
  \newtheorem*{rem*}{\protect\remarkname}
  \theoremstyle{plain}
  \newtheorem{prop}[thm]{\protect\propositionname}
  \providecommand{\lemmaname}{Lemma}
  \providecommand{\propositionname}{Proposition}
  \providecommand{\remarkname}{Remark}
\providecommand{\theoremname}{Theorem}
\begin{document}

\title{Delay embedding of periodic orbits using a fixed observation function}

\author{Raymundo Navarrete and Divakar Viswanath }

\maketitle

\lyxaddress{Department of Mathematics, University of Michigan (raymundo/divakar@umich.edu). }
\begin{abstract}
Delay coordinates are a widely used technique to pass from observations
of a dynamical system to a representation of the dynamical system
as an embedding in Euclidean space. Current proofs show that delay
coordinates of a given dynamical system result in embeddings generically
with respect to the observation function (Sauer, Yorke, Casdagli,
\emph{J. Stat. Phys.}, vol. 65 (1991), p. 579-616). Motivated by applications
of the embedding theory, we consider flow along a single periodic
orbit where the observation function is fixed but the dynamics is
perturbed. For an observation function that is fixed (as a nonzero
linear combination of coordinates) and for the special case of periodic
solutions, we prove that delay coordinates result in an embedding
generically over the space of vector fields in the $C^{r-1}$ topology
with $r\geq2$.
\end{abstract}

\section{Introduction}

\global\long\def\norm#1{\bigl|\bigl|#1\bigr|\bigr|}

\global\long\def\Norm#1{\left|\left|#1\right|\right|}

\global\long\def\caps#1{\mathlarger{#1}}

\global\long\def\abs#1{\left|#1\right|}

Suppose a physical system is described by the differential equation
$\frac{dx}{dt}=f(x)$, where $f:\mathbb{R}^{d}\rightarrow\mathbb{R}^{d}$.
Often the state vector $x$ is unobservable in its entirety, and that
is especially true if $d$ is large. Thus, reconstructing the flow
from observations is not straightforward. The technique of delay coordinates
makes it possible to look at a single scalar observation and reconstruct
the dynamics. We denote the scalar that is observed by $\pi x$. The
observation function $\pi$ could be a projection to a single coordinate,
for example, when the velocity of a fluid flow is recorded at a single
point and in a single direction. It could be some other linear function
of $x$. More generally, the observation function $\pi x$ could be
nonlinear. 

If $\phi_{t}(x)$ is the time-$t$ flow map, the idea behind delay
coordinates \cite{PackardCrutchfield1980,SauerYorkeCasdagli1991,Takens1981}
is to use the delay vector 
\[
\xi(x;\tau,n)=\left(\pi x,\pi\phi_{-\tau}(x),\ldots,\pi\phi_{-(n-1)\tau}(x)\right),
\]
which is observable, as a surrogate for the point $x$ in phase space.
For a suitable choice of delay $\tau$ and embedding dimension $n$,
delay coordinates yield a faithful representation of the phase space
in a sense we will explain. Delay coordinates have been employed in
many applications \cite{AlligoodSauerYorke2000,Strogatz2014}. Current
theory for delay coordinates \cite{SauerYorkeCasdagli1991} applies
perturbations to the observation function $\pi$. We consider the
situation where the observation function is fixed as a linear projection
and only the dynamical system $\frac{dx}{dt}=f(x)$ is perturbed.

Packard et al \cite{PackardCrutchfield1980} demonstrated that coordinate
vectors such as $(\pi\phi_{t}(x),\frac{d}{dt}\pi\phi_{t}(x))$ give
good representations of strange attractors. They noted that delay
coordinate vectors would be equivalent to coordinate vectors formed
using derivatives of the observed quantity.

A mathematical analysis of delay coordinates was undertaken in a famous
paper by Takens \cite{Takens1981} and independently by Aeyels \cite{Aeyels1980}.
In particular, Takens considered when $x\rightarrow\xi(x;\tau,n)$
is an embedding. Suppose $M$ is a manifold of dimension $m$, $A\subset M$
a submanifold of $M$ of dimension $d$, and $f:M\rightarrow N$ a
continuous map from $M$ to the manifold $N$. The restriction $f\bigl|_{A}$
is an embedding of $A$ in $N$ if the tangent map $df$ has full
rank at every point of $A$, $f\bigl|_{A}$ is injective, and $f\bigl|_{A}$
maps open sets in $A$ to open sets in its range in the subspace topology
\cite{GuilleminPollack2010,Hirsch2012}. For the definition to make
sense, the manifolds and $f$ must be at least $C^{1}$. More generally,
the manifolds $M,N$ and the map $f$ may be assumed to be $C^{r}$
with $r\geq1$ or with $r=\infty$. Takens concluded that delay coordinates
yield an embedding of compact manifolds without boundary if $n\geq2m+1$,
for \emph{generic} observation functions $\pi$ and \emph{generic}
vector fields $f$. A property is generic in the $C^{r}$ topology
if it holds for functions $f$ or $\pi$ belonging to a countable
intersection of open and dense sets \cite{Robinson1998}. Because
the $C^{r}$ spaces are Baire spaces \cite{Hirsch2012}, a countable
intersection of open and dense sets is dense as well as uncountable. 

The paper by Sauer et al \cite{SauerYorkeCasdagli1991} marked a major
advance in the theory of delay coordinates. The approach to embedding
theorems outlined by Takens relied on parametric transversality. Parametric
transversality arguments typically have a local part and a global
part, and the transition from local arguments to a global theorem
is made using partitions of unity \cite{Hirsch2012}. 

Sauer et al \cite{SauerYorkeCasdagli1991} sidestepped transversality
theory almost entirely. Unlike in transversality theory, there is
no explicitly local part in the arguments of Sauer et al \cite{SauerYorkeCasdagli1991}.
The local part of the argument comes down to a verification of Lipshitz
continuity. The set being embedded is only assumed to have finite
box counting dimension. The arguments are mostly probabilistic and
the globalization step relies only on the finiteness of the box counting
dimension. The only real analogy to differential topology appears
to be to the proof of Sard's theorem \cite{Hirsch2012}, which too
is proved using probabilistic arguments. Sauer et al prove prevalence
\cite{HuntSauerYorke1992}, which goes beyond genericity. A property
is prevalent with respect to the observation function $\pi$, if the
property holds when any given $\pi$ is replaced by $\pi+\sum_{\alpha\in I_{\alpha}}c_{\alpha}p_{\alpha}$,
with $p_{\alpha}$ being monomials indexed by the finite set $I_{\alpha}$,
for almost every choice of the coefficients $c_{\alpha}$.\lyxdeleted{Divakar Viswanath,,,}{Sun May  6 22:11:07 2018}{
}

The embedding theorem of Sauer et al \cite{SauerYorkeCasdagli1991}
fixes the dynamical system and allows only the observation function
$\pi$ to be perturbed. The statements of genericity and prevalence
are with regard to $\pi$, not the original dynamical system. If consideration
is restricted to subsets $A$ of box counting dimension $d$, Sauer
et al only require $n>2d$. Thus, we could even have $n<m$.

As mentioned, we investigate embedding theorems in which the observation
function is fixed. For example, $\pi$ could be fixed as a linear
projection that extracts some component of the state vector. We allow
perturbations of the dynamical system only.

The motivation for considering such embedding theorems is as follows.
First, on purely aesthetic grounds, it appears desirable to have an
embedding theory that depends upon the dynamics and not the observation
function. Second, in many applications the observation function is
fixed, whereas the dynamical system itself is parametrized \cite{AlligoodSauerYorke2000,Casdagli1989,Gibson1992,Robinson2005,Strogatz2014}.
If $\pi$ extracts a single component at a single point in the velocity
field of a fluid, it is more pertinent to make the embedding theory
depend upon the dynamics rather than upon the observation function.

Aeyels \cite{Aeyels1980} stated that delay coordinates are injective
for generic flows and a fixed observation function. In the context
of applications, stronger theorems would be desirable as argued by
Sauer et al \cite{SauerYorkeCasdagli1991}. First, an open and dense
set can have arbitrarily small measure implying that prevalence, which
is stronger than genericity, is a more appropriate concept. Second,
the dynamics may be confined to an attractor of dimension much smaller
than that of the state vector of the flow. In such a situation, we
would like the embedding dimension to be determined by the dimension
of the attractor and not the dimension of the state vector of the
flow.

In this article, we consider the second of these two directions. Obtaining
an embedding dimension that depends on the dimension of the attractor
and not the flow introduces new difficulties when the observation
is fixed and the flow is parametrized. Current proofs \cite{SauerYorkeCasdagli1991,Takens1981}
rely on perturbing the observation function to produce an embedding.
When the observation function is fixed, the additional step of propagating
perturbations to the flow to the observed delay coordinates will need
to be handled.\lyxdeleted{Divakar Viswanath,,,}{Sun May  6 22:11:07 2018}{
} We need to understand how perturbing the flow perturbs the invariant
set or attractor, which is assumed to persist, and how the perturbations
to the invariant set or attractor propagate to delay coordinates.
When the flow is fixed and the observation function is perturbed,
the attractor to be embedded, which depends only upon the flow, is
unchanged by the perturbations. In contrast, when the observation
function is fixed and the flow is perturbed, the set to be embedded
is altered by the perturbations. 

To get a handle on such difficulties, we limit ourselves to hyperbolic
periodic orbits and prove that they embed generically in $\mathbb{R}^{3}$.
The techniques we use are those of transversality theory. Although
periodic orbits are only a special case, they are an important special
case and arise frequently in applications, for example \cite{Borgers2017,Forger17}. 

To conclude this introduction, we mention some other extensions of
delay coordinate embedding theory. Embedding theory has been considered
for endomorphisms \cite{Takens2002} as well as delay differential
equations \cite{DellnitzMoloZiessler2016}, for continuous but not
necessarily smooth observation functions \cite{Gutman2016,GutmanQiaoSzabo2017},
and in concert with Kalman filtering \cite{HamiltonBerrySauer2017}.
The concept of determining modes and points in fluid mechanics and
PDE is related to embedding theory \cite{KukavikaRobinson2004,Robinson2005,Robinson2011}.
Delay coordinates have been used for noise reduction \cite{RobinsonM2016,UrbanowiczHolyst2003}.
The embedding theory of Sauer et al \cite{SauerYorkeCasdagli1991}
has been generalized to PDE by Robinson \cite{Robinson2005,Robinson2011}.
The current embedding theory for PDE also relies on perturbing the
observation function.

\section{Embedding periodic signals in $\mathbb{R}^{3}$}

\begin{figure}
\includegraphics[scale=0.15]{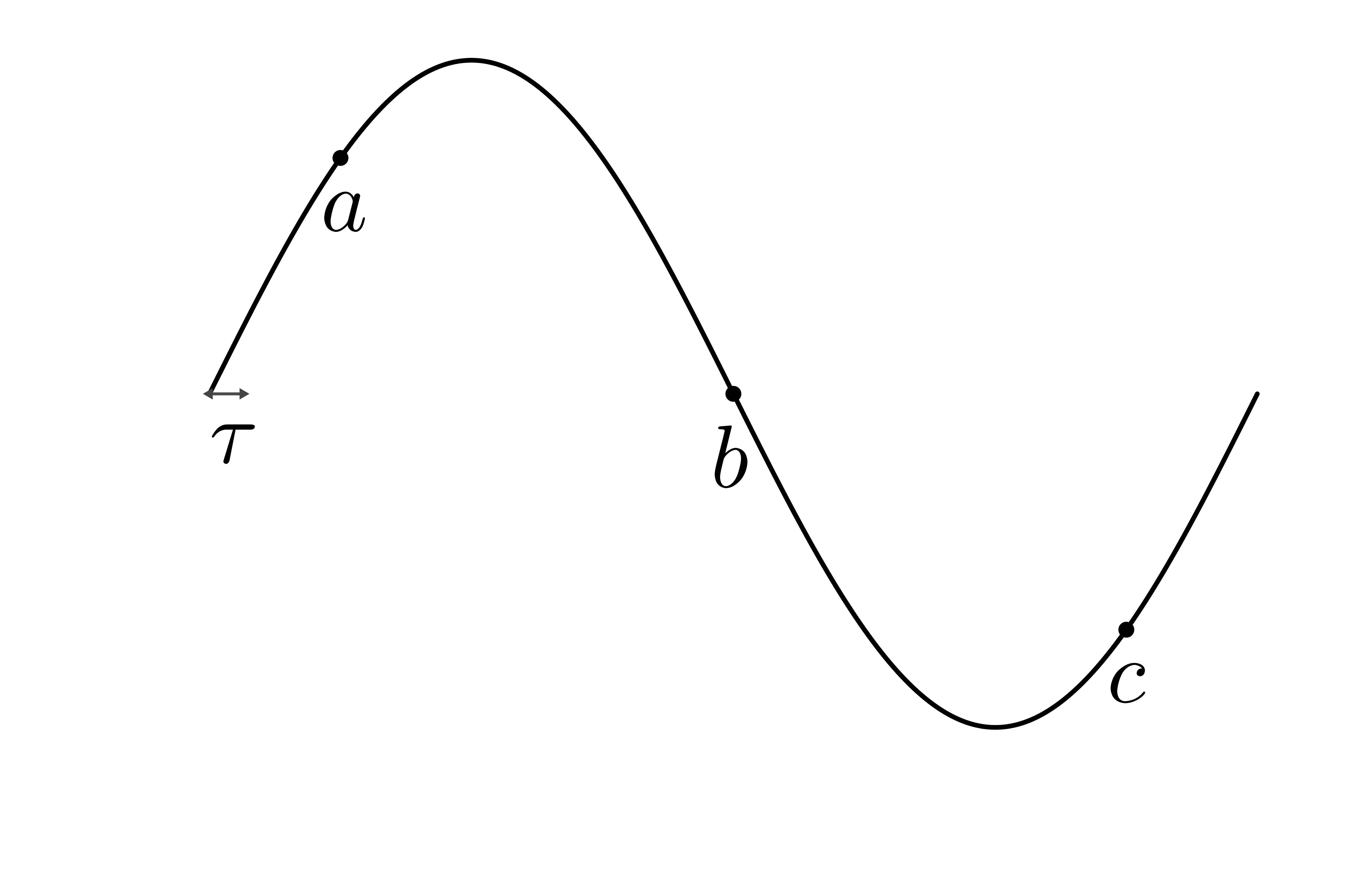}\includegraphics[bb=350bp 200bp 1200bp 758bp,clip,scale=0.3]{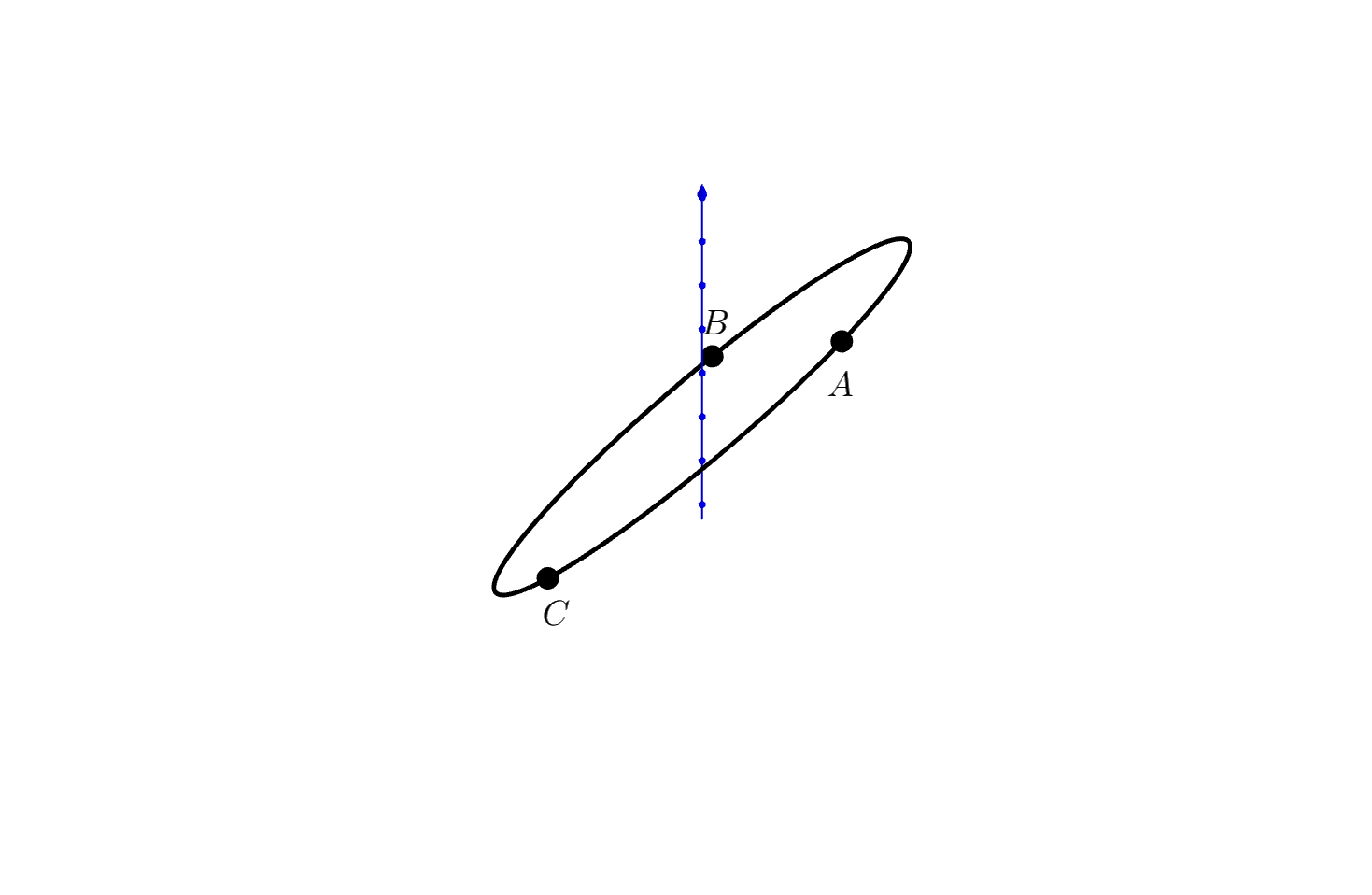}

\caption{A periodic signal (only a single period is shown) and its delay embedding
in $\mathbb{R}^{3}$ with delay $\tau$. The points $a$, $b$, c
map to $A$, $B$, $C$ with delay coordinates.\label{fig:fig-1}}
\end{figure}

In the next section, we consider periodic solutions of differential
equations. In this section, we begin by considering periodic signals.
A periodic signal is any function $o:\mathbb{R}\rightarrow\mathbb{R}$
with a period $T>0$. Figure \ref{fig:fig-1} shows a periodic signal
and its delay embedding in $\mathbb{R}^{3}$.

To make the definition of periodic signals more precise, let $\mathcal{O}^{r}$
be the set of $C^{r}$ functions $o:[0,T]\rightarrow\mathbb{R}$ with
period $T>0$. Periodicity requires $r$ derivatives of $o(t)$ to
match at $t=0$ and $t=T$. The domain of functions in $\mathcal{O}^{r}$,
which we will write as $[0,T)$ for signals $o$ of period $T$, is
compact and homeomorphic to $S^{1}$. More precisely, the domain is
the identification space obtained by identifying $0$ and $T$ in
$[0,T]$. For convenience, we shall refer to it as $[0,T)$, with
the understanding that when we refer to an interval $(\alpha,\beta)$
it can wrap around. The elements of $\mathcal{O}^{r}$ will be referred
to as periodic signals. Even if $o\in\mathcal{O}^{r}$ is constant,
it must be equipped with a period $T>0$, and if $T$ is chosen differently,
we get a different element of $\mathcal{O}^{r}$.

For the periodic signal shown in Figure \ref{fig:fig-1}, the map
$t\rightarrow(o(t),o(t-\tau),o(t-2\tau))$ for $0\leq t<T$ results
in an embedding of the circle. Each point of the circle $[0,T)$ maps
to a distinct point in $\mathbb{R}^{3}$ so that the delay map is
injective. The delay is also immersive because a small movement along
the periodic signal maps to a small and nonzero movement in the embedding
space $\mathbb{R}^{3}$. Because the delay map is both injective and
immersive, it is an embedding. 

\begin{figure}
\begin{centering}
\includegraphics[bb=0bp 300bp 1539bp 850bp,clip,scale=0.2]{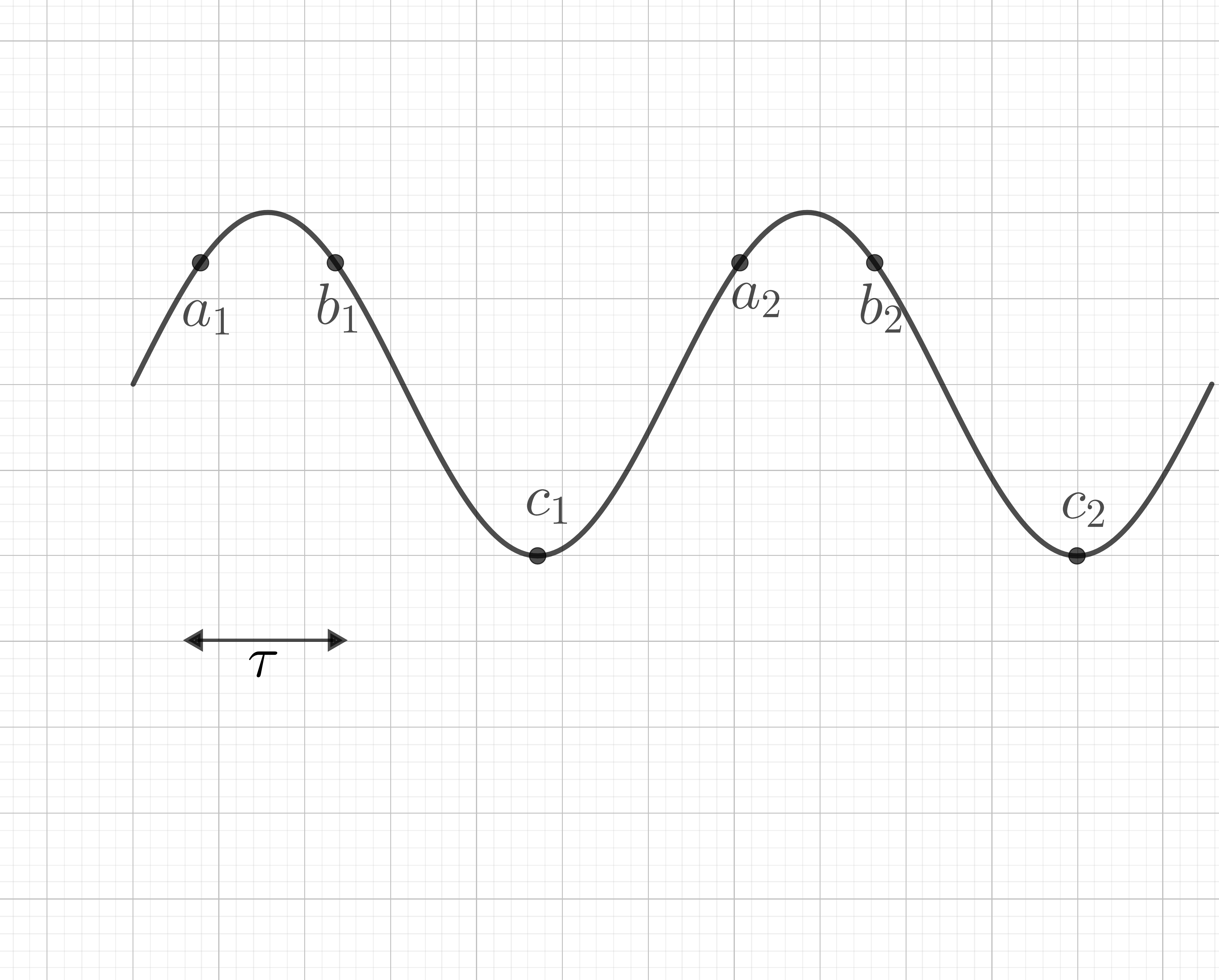}
\par\end{centering}

\caption{The points $a_{1}$ and $a_{2}$, and likewise $b_{1},b_{2}$ and
$c_{1},c_{2}$, map to the same point in $\mathbb{R}^{3}$ under delay
embedding with the delay shown. The fundamental period of this signal
is half of what is shown. However, by modifying the signal in the
box shown, its fundamental period becomes equal to the interval shown
and the delay map still fails to be injective because $c_{1}$ and
$c_{2}$ map to the same point in $\mathbb{R}^{3}$.\label{fig:fig-2}}
\end{figure}

Figure \ref{fig:fig-2} shows a situation in which the delay map is
not injective. This example is in fact the same as in Figure \ref{fig:fig-1}
but the period is taken to be double of what it is in Figure \ref{fig:fig-1}.
As a result, points which are separated by the fundamental period
map to the same point in $\mathbb{R}^{3}$. As shown in Figure \ref{fig:fig-2},
the signal may be modified so that the fundamental interval is not
repeated and the delay map still fails to be injective. Later in this
section, we will prove that signals whose delay maps embed the circle
in $\mathbb{R}^{3}$ are more typical.

\subsection{Local argument for periodic signals}

If $r\in\mathbb{Z}^{+}$ and $o,o'\in\mathcal{O}^{r}$ are two periodic
signals, define
\begin{equation}
d_{r}(o,o')=\sup_{k=0,\ldots r}\sup_{0\leq s<1}|o^{(k)}(sT)-o'^{(k)}(sT')|+|T-T'|.\label{eq:cr-topology-or}
\end{equation}
The $C^{r}$ topology on $\mathcal{O}^{r}$ is defined by this metric.
The $\mathcal{O}^{r}$ norm of a periodic signal is $\norm o_{r}=\sup_{k=0,\ldots,r}\sup_{0\leq t<T}\abs{o^{(k)}(t)}.$
By our definition, $\mathcal{O}^{r}$ is not a vector space because
signals with different periods cannot be added. However, signals of
a fixed period are a vector space and $\norm{\cdot}_{r}$ is a norm
over it.The $C^{\infty}$ topology is the union of $C^{r}$ topologies
over $r\in\mathbb{Z}^{+}$ as explained in \cite{Hirsch2012}. For
concepts and results of differentiable topology, such as critical
points, regular values, and Sard's theorem, our main reference is
Hirsch \cite{Hirsch2012}. The same topics are discussed from a dynamical
point of view in \cite{PalisdeMelo2012,Robinson1998}.

Figure \ref{fig:fig-2} shows a signal which does not embed the circle
in $\mathbb{R}^{3}$ under delay mapping. However, it is clear from
observation that points that are nearby such as $a_{1}$ and $b_{1}$
map to distinct points in $\mathbb{R}^{3}$. In fact, quite generally,
if the number of critical points in $[0,T)$ is finite, nearby points
in the signal will map to distinct points in $\mathbb{R}^{3}$, as
we later prove. We begin by considering whether any periodic signal
may be perturbed slightly so that it has only finitely many critical
points. 
\begin{lem}
Let $o\in\mathcal{O}^{r}$, $r\geq2$, be a periodic signal of period
$T>0$.\lyxdeleted{Divakar Viswanath,,,}{Sun May  6 22:11:07 2018}{
} If $0$ is a regular value of $do/dt$, then the periodic signal
$o(t)$ has finitely many critical points in $[0,T)$.\label{lem:note1-lem1}\end{lem}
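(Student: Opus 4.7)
The plan is to identify the critical points of $o$ with the zero set of $do/dt$ and then apply the regular value theorem together with compactness of the domain $[0,T)$.

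First I would note that since $o \in \mathcal{O}^r$ with $r \geq 2$, the derivative $do/dt$ is a $C^{r-1}$ function with $r-1 \geq 1$, and the matching-of-derivatives condition built into the definition of $\mathcal{O}^r$ ensures that $do/dt$ descends to a $C^{r-1}$ map from the compact $1$-manifold $[0,T)$ (homeomorphic to $S^1$ via the identification of $0$ and $T$) into $\mathbb{R}$. By definition, the set of critical points of $o$ in $[0,T)$ is exactly $(do/dt)^{-1}(0)$, so it suffices to show this preimage is finite.

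Next I would invoke the regular value theorem from Hirsch. The hypothesis that $0$ is a regular value of $do/dt$ says that at every $t_0$ with $do/dt(t_0) = 0$, the differential of $do/dt$ is surjective, i.e., $d^2o/dt^2(t_0) \neq 0$. The preimage theorem then asserts that $(do/dt)^{-1}(0)$ is a $C^{r-1}$ submanifold of $[0,T)$ of codimension one, hence of dimension zero. A zero-dimensional submanifold is discrete, and any discrete subset of the compact space $[0,T)$ must be finite, which yields the conclusion.

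There is no real obstacle in this argument; it is a textbook application of the regular value theorem together with compactness. An elementary alternative that avoids explicitly naming the preimage theorem is to observe directly that at each critical point $t_0$ the nonvanishing of $d^2o/dt^2(t_0)$ implies, by continuity, that $do/dt$ is strictly monotonic on some neighborhood of $t_0$, so that $t_0$ is an isolated zero of $do/dt$; a compact space in which every point of a set $S$ is isolated in $S$ can contain only finitely many such points, again giving finiteness.
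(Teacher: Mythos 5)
Your argument is correct and is essentially the contrapositive of the paper's proof: the paper assumes infinitely many zeros of $do/dt$, extracts an accumulation point $p$ by compactness, and observes that both $do/dt$ and $d^{2}o/dt^{2}$ vanish at $p$, contradicting regularity; you instead argue directly that regularity makes each zero isolated and conclude by compactness. One small imprecision to repair: a subset of a compact space all of whose points are isolated in it need not be finite (consider $\{1/n\}\subset[0,1]$), so you must also use that $(do/dt)^{-1}(0)$ is \emph{closed} (it is the preimage of a closed set under a continuous map, or a properly embedded $0$-dimensional submanifold in the preimage-theorem version); this closedness is exactly what lets the paper assert that the accumulation point is itself a zero of $do/dt$.
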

\begin{proof}
Suppose $do/dt=0$ at infinitely many points on the compact circle
$[0,T)$. Let $p\in[0,T)$ be an accumulation point of the set of
zeros. Then $d^{2}o(p)/dt^{2}=0$ and $do(p)/dt=0$ implying that
$0$ is not a regular value of $do/dt$.
\end{proof}

\begin{figure}
\centering{}\includegraphics[scale=0.2]{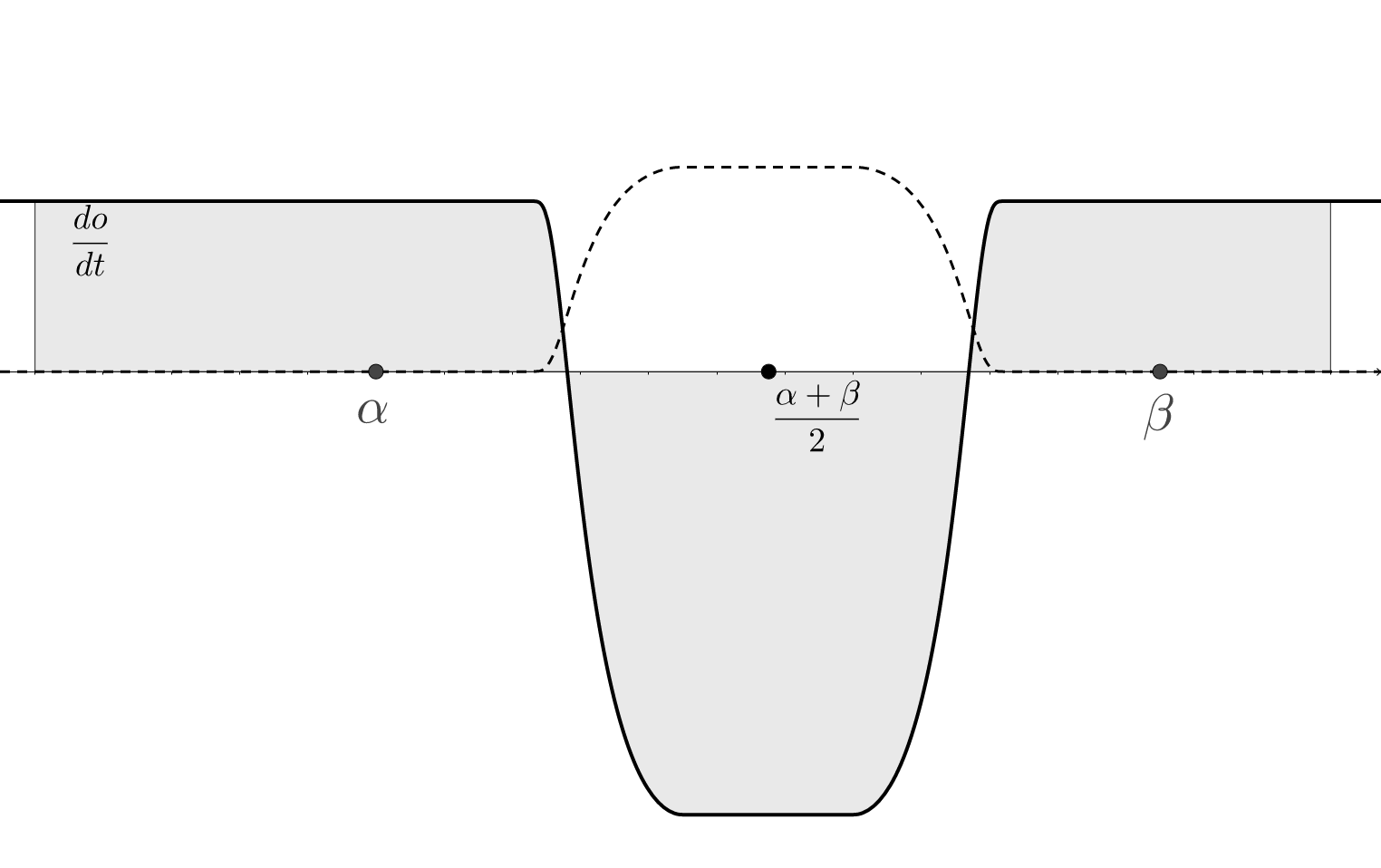}\caption{An infinitely differentiable (bump) function (dashed line), which
is zero outside $(\alpha,\beta)$ and $1$ near the middle of that
interval, subtracted from a constant value of $\frac{do}{dt}$. If
the amount subtracted is adjusted, the integral of $\frac{do}{dt}$
over one full period becomes zero as shown. \label{fig:fig-3}}
\end{figure}

The following lemma generates a periodic signal of period $T$ whose
derivative is $\frac{do}{dt}=\epsilon$ everywhere except over a given
interval $(\alpha,\beta)$. Any function whose derivative is $\frac{do}{dt}=\epsilon$,
$\epsilon\neq0$, everywhere cannot be periodic. Therefore, the proof
of the lemma comes down to modifying the derivative carefully in the
interval $(\alpha,\beta)$.
\begin{lem}
Given $(\alpha,\beta)\subset[0,T)$ and $\delta>0$, for all sufficiently
small $\epsilon$ there exists an infinitely differentiable periodic
signal $o$ of period $T$ such that $do(t)/dt=\epsilon$ for $t\notin(\alpha,\beta)$
and $|do(t)/dt|<\delta$ for $t\in(\alpha,\beta)$. In addition, for
$r\in\mathbb{Z}^{+}$, $\norm o_{r}\rightarrow0$ as $\epsilon\rightarrow0$.\label{lem:note1-lem2}\end{lem}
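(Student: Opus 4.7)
The construction is essentially dictated by Figure \ref{fig:fig-3}: take a fixed bump and subtract a scaled version of it from the constant $\epsilon$. The plan is as follows.

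First, I would fix, independently of $\epsilon$, an infinitely differentiable bump function $\psi:[0,T)\to\mathbb{R}$ with $0\le\psi\le 1$, support contained in $(\alpha,\beta)$, and $\psi\equiv 1$ on some nonempty subinterval of $(\alpha,\beta)$. Let $I=\int_{\alpha}^{\beta}\psi(t)\,dt>0$; note that $I$ depends only on $\psi$, not on $\epsilon$. Define
\[
g(t)=\epsilon-\frac{\epsilon T}{I}\,\psi(t),
\]
and set $o(t)=\int_{0}^{t}g(s)\,ds$ for $t\in[0,T]$. By construction $g$ is infinitely differentiable, and the normalization of the coefficient of $\psi$ gives $\int_{0}^{T}g(s)\,ds=\epsilon T-(\epsilon T/I)\cdot I=0$, so $o(0)=o(T)$ and all derivatives of $o$ (which are derivatives of $g$) agree at $0$ and $T$ since $\psi$ is supported strictly inside $(\alpha,\beta)$. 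Hence $o\in\mathcal{O}^{\infty}$ is a periodic signal of period $T$.

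Next I would verify the two prescribed properties of $do/dt=g$. For $t\notin(\alpha,\beta)$, $\psi(t)=0$ and therefore $g(t)=\epsilon$ exactly, as required. For $t\in(\alpha,\beta)$, the triangle inequality gives
\[
|g(t)|\le|\epsilon|+\frac{|\epsilon|T}{I}\,|\psi(t)|\le|\epsilon|\Bigl(1+\frac{T}{I}\Bigr),
\]
and since $1+T/I$ is a fixed constant, this is smaller than $\delta$ for all $\epsilon$ with $|\epsilon|<\delta/(1+T/I)$, which proves the second property.

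Finally I would check $\|o\|_{r}\to 0$ as $\epsilon\to 0$. For $k\ge 2$, $o^{(k)}(t)=-(\epsilon T/I)\,\psi^{(k-1)}(t)$, so $\sup|o^{(k)}|\le(|\epsilon|T/I)\|\psi^{(k-1)}\|_{\infty}$. For $k=1$ we have the bound above on $|g|$, and for $k=0$ we use $|o(t)|\le T\sup|g|$. Each bound is a fixed multiple of $|\epsilon|$, so $\|o\|_{r}\to 0$ as $\epsilon\to 0$. There is no real obstacle here; the only point that requires a bit of care is keeping $\psi$ (and hence $I$ and $\|\psi^{(k)}\|_{\infty}$) fixed as $\epsilon$ varies, so that all the constants in the estimates are independent of $\epsilon$.
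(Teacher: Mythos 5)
Your construction is exactly the paper's: a fixed smooth bump supported in $(\alpha,\beta)$, scaled by a constant proportional to $\epsilon$ (your $\epsilon T/I$ is the paper's $k=\epsilon T/((\beta-\alpha)c)$) so that $do/dt$ integrates to zero over a period, then integrated to recover $o$. The proposal is correct and, if anything, spells out the $|do/dt|<\delta$ and $\norm{o}_{r}\to 0$ estimates more explicitly than the paper does.
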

\begin{proof}
Let $\lambda(x)$ be an infinitely differentiable bump function with
$\lambda(x)\in[0,1]$ for $x\in[0,1]$, $\lambda(x)=1$ for $x\in[1/4,3/4]$,
and $\lambda(x)=0$ for $x\in[0,1/8]$ and $x\in[7/8,1]$. If $\int_{0}^{1}\lambda(x)\,dx=c$
then $1/2<c<1$. The bump function $\lambda(x)$ is used to modify
$do/dt$ in the interval $(\alpha,\beta)$.

Define $do(t)/dt=\epsilon$ for $t\notin(\alpha,\beta)$ and more
generally 
\[
\frac{do(t)}{dt}=\epsilon-k\lambda((t-\alpha)/(\beta-\alpha))
\]
for $t\in[0,T)$. The idea behind the construction is shown in Figure
\ref{fig:fig-3}: if the bump function is shifted to the interval
$(\alpha,\beta)$ and a suitable multiple is subtracted, $\frac{do}{dt}$
may then be integrated to obtain a periodic function.

More precisely, it follows that $\int_{0}^{T}(do(t)/dt)\;dt=\epsilon T-k(\beta-\alpha)c.$
The integral is zero if $k=\epsilon T/(\beta-\alpha)c$. For $\epsilon$
small, $k$ is small as well.\lyxdeleted{Divakar Viswanath,,,}{Sun May  6 22:11:07 2018}{
} We may obtain $o(t)$ by integrating $do(t)/dt$, with $\norm o_{r}$
proportional to $\epsilon$.
\end{proof}
The following lemma proves that any sufficiently smooth periodic signal
can be perturbed to a nearby periodic signal with finitely many critical
points. 
\begin{lem}
If $o'\in\mathcal{O}^{r}$, $r\geq2$, is a periodic signal, there
exists another periodic signal $o$ of the same period with $d_{r}(o,o')$
arbitrarily small and such that $o$ has only finitely many critical
points (including local maxima and minima) and $0$ is a regular value
of $do/dt$.\label{lem:note1-lem3}\end{lem}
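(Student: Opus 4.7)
The plan is to realize $o$ as $o' - \tilde o$, where $\tilde o$ is the small correction supplied by Lemma \ref{lem:note1-lem2}, with the interval $(\alpha,\beta)$ and the parameter $\epsilon$ chosen so that the resulting derivative $do/dt$ has $0$ as a regular value.

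First I would dispose of the degenerate case where $o'$ is constant: a small sinusoidal perturbation $\eta\cos(2\pi t/T)$ with $\eta>0$ arbitrarily small produces a signal whose derivative is $-(2\pi\eta/T)\sin(2\pi t/T)$, and at its two critical points the second derivative is $\mp(2\pi/T)^{2}\eta\neq 0$, so $0$ is a regular value. Applying Lemma \ref{lem:note1-lem1} then finishes this case, so I may assume $o'$ is nonconstant.

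In the nonconstant case, I would choose an open interval $(\alpha,\beta)\subset[0,T)$ on which $|do'/dt|\geq M$ for some constant $M>0$; such an interval exists because $do'/dt$ is continuous and not identically zero. Since $o'\in\mathcal O^{r}$ with $r\geq 2$, the map $do'/dt:[0,T)\to\mathbb R$ is $C^{1}$, so by Sard's theorem its set of critical values has Lebesgue measure zero. In particular there exist arbitrarily small nonzero $\epsilon$ that are regular values of $do'/dt$. Fix such an $\epsilon$ and apply Lemma \ref{lem:note1-lem2} with $\delta<M$ and this $\epsilon$ to obtain a periodic $\tilde o$ of period $T$ satisfying $d\tilde o/dt=\epsilon$ off $(\alpha,\beta)$, $|d\tilde o/dt|<\delta$ on $(\alpha,\beta)$, and $\|\tilde o\|_{r}$ as small as desired.

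Set $o=o'-\tilde o$. On $(\alpha,\beta)$ one has $|do/dt|\geq|do'/dt|-|d\tilde o/dt|>M-\delta>0$, so there are no critical points there. Off $(\alpha,\beta)$ the function $d\tilde o/dt=\epsilon$ is constant, so $d^{2}\tilde o/dt^{2}=0$; hence a critical point of $o$ there is a point where $do'/dt=\epsilon$, and by the choice of $\epsilon$ as a regular value of $do'/dt$, $d^{2}o/dt^{2}=d^{2}o'/dt^{2}\neq 0$ at every such point. Thus $0$ is a regular value of $do/dt$ on all of $[0,T)$, and Lemma \ref{lem:note1-lem1} yields the finiteness of the critical set. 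Finally $d_{r}(o,o')=\|\tilde o\|_{r}$ (equal periods), which can be made arbitrarily small by shrinking $\epsilon$. The only subtle point is aligning the two constraints at once, namely keeping $do/dt$ away from zero inside $(\alpha,\beta)$ while arranging $0$ to be a regular value off $(\alpha,\beta)$; the choice $\delta<M$ together with Sard's theorem handles this cleanly, so I expect no serious obstacle beyond bookkeeping.
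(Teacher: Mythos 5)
Your proposal is correct and follows essentially the same route as the paper: handle the constant case by a small sinusoid, otherwise pick a small regular value $\epsilon$ of $do'/dt$ via Sard, localize the non-periodicity of the correction $t\epsilon$ into an interval where $|do'/dt|$ is bounded below, and subtract the signal from Lemma \ref{lem:note1-lem2}. Your write-up actually spells out the final verification (no zeros of $do/dt$ inside $(\alpha,\beta)$, nondegenerate zeros outside) more explicitly than the paper does, but the construction is the same.
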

\begin{proof}
If $o'(t)$ is constant we can perturb to $\epsilon\sin(tT/2\pi)$
for arbitrarily small $\epsilon$ and verify the theorem. We will
assume that $o'$ is not constant.

Consider $\frac{do'}{dt}(t)$ as a map from the circle $[0,T')$ to
$\mathbb{R}$. If $0$ is a regular value of this map, we are done
by Lemma \ref{lem:note1-lem1}.

If not, there exists a regular value $\epsilon$ of $do'/dt$ arbitrarily
close to $0$ by Sard's theorem (here $r\geq2$ is needed). Suppose
we look at $do'(t)/dt-\epsilon$. This function has a regular value
at $0$. However, the corresponding perturbation of $o'$ is $o'(t)-t\epsilon$
and is not periodic. 

Because $o'(t)$ is not constant, there exists an interval $(\alpha,\beta)$
in the circle $[0,T)$ over which $do'(t)/dt$ is nonzero. Without
loss of generality, we assume $do'(t)/dt>\delta>0$ in the interval
$(\alpha,\beta)$ (consider $-o'(t)$ for the case where the derivative
is negative). Using Lemma \ref{lem:note1-lem2}, we may find a periodic
signal $p(t)$ such that $dp/dt=\epsilon$ for $t\notin(\alpha,\beta)$
and $|dp/dt|<\delta$ for $t\in(\alpha,\beta)$. Set $o(t)=o'(t)-p(t)$
to obtain a periodic signal with $0$ being a regular value of $do/dt$
to complete the proof.\end{proof}
\begin{rem*}
Lemma \ref{lem:note1-lem1} is evidently true if we only assume the
second derivative of the periodic signal $o(t)$ to exist and not
necessarily continuous. In fact, Lemma \ref{lem:note1-lem3} is also
true under the same weaker assumption because, in one dimension, Sard's
theorem requires only the existence of the derivative (see Exercise
1 of Section 3.1 of \cite{Hirsch2012}).
\end{rem*}

\begin{figure}
\begin{centering}
\includegraphics[scale=0.2]{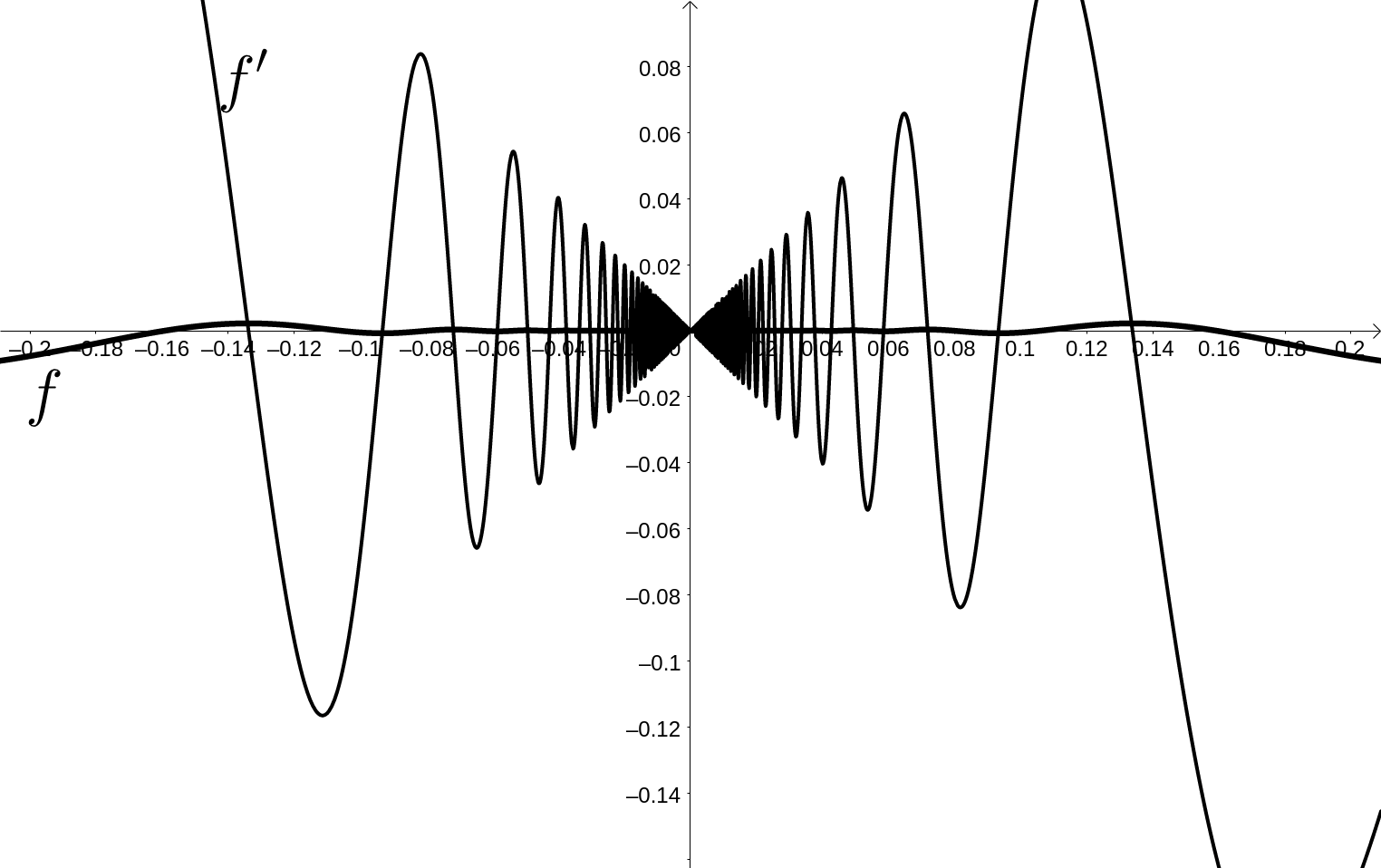}
\par\end{centering}

\caption{The function $f(x)=x^{3}\sin\left(\frac{1}{x}\right)$ and its derivative.
\label{fig:fig-4}}
\end{figure}

The proof of Lemma \ref{lem:note1-lem3} may be illustrated using
Figure \ref{fig:fig-4}. The figure shows a part of the graph of $f(x)=x^{3}\sin(1/x)$
and its derivative $f'(x)$. It is evident that the critical points
of $f'$, where $f''(x)=0$, accumulate at the origin. In fact, a
small perturbation cannot eliminate the accumulation of critical points
because $f(x)$ does not have a second derivative at $x=0$. However,
if $f(x)=x^{5}\sin(1/x)$, a function whose second derivative looks
like the derivative show in Figure \ref{fig:fig-4}, Sard's theorem
may be used to obtain a small perturbation such that $0$ is a regular
value of the derivative of the perturbed function.

If $o$ is a periodic signal with finitely many critical points, then
its circular domain $[0,T)$ may be decomposed into finitely many
intervals with local minima and maxima at either end. Let $\mu$ denote
the minimum width among such intervals. Because $o(t)$ is monotonic
in each interval, we refer to each such interval as the minimum interval
of strict monotonicity. If the delay is $\tau$, we denote the point
$(o(t),o(t-\tau),o(t-2\tau))$ by $o(t;\tau)$.
\begin{lem}
If $0<|t_{1}-t_{2}|\leq\mu/3$, where $\mu$ is the minimum interval
of strict monotonicity, and if the delay $\tau$ satisfies $0<\tau\leq\mu/3$,
then $o(t_{1};\tau)\neq o(t_{2};\tau)$. If $0$ is a regular value
of $\frac{do(t)}{dt}$, we also have $\frac{do(t;\tau)}{dt}\neq0$
for all $t\in[0,T)$.\label{lem:note1-lem4}\end{lem}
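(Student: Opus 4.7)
I would prove the injectivity statement by contradiction, assuming $o(t_1;\tau) = o(t_2;\tau)$ with $t_1 < t_2$ and $t_2 - t_1 \le \mu/3$. The three component equalities $o(t_1 - i\tau) = o(t_2 - i\tau)$ for $i = 0, 1, 2$ each demand, by Rolle's theorem (since $o$ is strictly monotonic on any interval not containing a critical point), a critical point of $o$ inside the open interval $I_i^{\circ} = (t_1 - i\tau,\, t_2 - i\tau)$. The key observation is that $I_0^{\circ} \cup I_1^{\circ} \cup I_2^{\circ} \subset (t_1 - 2\tau,\, t_2)$, an open interval of length $t_2 - t_1 + 2\tau \le \mu/3 + 2(\mu/3) = \mu$. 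By the definition of $\mu$ as the minimum width of an interval of strict monotonicity, any two distinct critical points are at least $\mu$ apart, so an open interval of length at most $\mu$ contains at most one critical point. Hence the three critical points must coincide at a single $c$, which must then lie in the triple intersection $I_0^{\circ} \cap I_1^{\circ} \cap I_2^{\circ} = (t_1,\, t_2 - 2\tau)$. If $2\tau \ge t_2 - t_1$ this intersection is empty, immediately contradicting the existence of such a $c$, so I may assume $2\tau < t_2 - t_1$.

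When $c \in (t_1, t_2 - 2\tau)$ does exist, I take WLOG $c$ to be a local maximum, so that $o$ is strictly increasing on $[t_1 - 2\tau, c]$ and strictly decreasing on $[c, t_2]$. The inequalities $t_1 - \tau < t_1 < c$ give $o(t_1 - \tau) < o(t_1)$, while $c < t_2 - 2\tau < t_2 - \tau < t_2$ gives $o(t_2 - \tau) > o(t_2)$. Combined with $o(t_1) = o(t_2)$, these sandwich yields $o(t_1 - \tau) < o(t_2 - \tau)$, contradicting the $i = 1$ equality. I expect this ``collapsed critical point'' case to be the only delicate part of the proof; everything else reduces to bookkeeping of interval lengths.

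For the immersivity conclusion, the derivative along the delay vector is $\frac{d}{dt}\, o(t;\tau) = (o'(t),\, o'(t - \tau),\, o'(t - 2\tau))$. If this vector vanishes, then $t$ and $t - \tau$ are two distinct critical points of $o$ at distance $\tau \le \mu/3 < \mu$, violating the minimum spacing $\mu$ between critical points. The hypothesis that $0$ is a regular value of $do/dt$ enters here only to guarantee, via Lemma \ref{lem:note1-lem1}, that $o$ has finitely many critical points so that $\mu > 0$ is well defined.
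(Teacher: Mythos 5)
Your proof is correct, and it reaches the same essential conclusion as the paper's, but by a noticeably different route, so a comparison is worthwhile. The paper argues directly: since $|t_1-t_2|\le\mu/3$, the points $t_1,t_2$ lie either in the same interval of strict monotonicity (whence $o(t_1)\neq o(t_2)$ at once) or in neighboring ones; in the latter case, with $o(t_1)=o(t_2)$ and WLOG a local maximum between them, it shows the unique minimum of $o$ over $[t_1-\tau,t_2]$ is attained at $t_1-\tau$, so $o(t_1-\tau)<o(t_2-\tau)$ --- only the first two delay coordinates are ever used. You instead argue by contradiction from all three coordinate equalities, extract via Rolle an extremum in each $(t_1-i\tau,\,t_2-i\tau)$, and use the $\ge\mu$ spacing to collapse them to a single extremum $c$ in the triple intersection $(t_1,\,t_2-2\tau)$. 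That extra localization, in particular $c<t_2-2\tau<t_2-\tau$, is what lets you finish with the clean two-sided sandwich $o(t_1-\tau)<o(t_1)=o(t_2)<o(t_2-\tau)$ and avoid the paper's ``unique minimum'' step, which must cover the possibility $t_2-\tau<c$; the price is that you invoke the third coordinate, which the paper's argument shows is not needed. One wording point to tighten: what Rolle hands you is a point where $o'=0$, but the $\ge\mu$ spacing is guaranteed only for local extrema (endpoints of the monotonicity decomposition), not for arbitrary critical points; it is the non-injectivity of $o$ on $[a,b]$, or equivalently the regular-value hypothesis on $do/dt$ (which forces $o''\neq 0$ at every critical point), that makes every relevant critical point a nondegenerate extremum. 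For the same reason, that hypothesis does slightly more in your immersivity step than merely making $\mu$ well defined --- it is what rules out a non-extremal zero of $o'$ at $t-\tau$ within distance $\tau<\mu$ of one at $t$. The immersivity argument itself coincides with the paper's.
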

\begin{proof}
Because $|t_{1}-t_{2}|\leq\mu/3$, $t_{1}$and $t_{2}$ lie in either
the same interval of strict monotonicity of the periodic signal $o(t)$
or in neighboring intervals. If they lie in the same interval, we
must have either $o(t_{1})<o(t_{2})$ or $o(t_{2})<o(t_{1})$ proving
the lemma.

If $t_{1}$ and $t_{2}$ lie in neighboring intervals, we may assume
$t_{1}<t_{2}$ without loss of generality. If $o(t_{1})\neq o(t_{2})$,
there is nothing to prove. So we assume $o(t_{1})=o(t_{2})$ in addition.
Again without loss of generality, we assume that $o(t)$ first increases
and then decreases as $t$ increases from $t_{1}$ to $t_{2}$.

With these assumptions, $t_{1}$ and $t_{1}-\tau$ must lie in the
same interval of monotonicity because $\tau\leq\mu/3$, and therefore
$o(t_{1}-\tau)<o(t_{1})$. Further $t_{2}-\tau\in(t_{1}-\tau,t_{2})$
and the unique minimum of $o(t)$ for $t\in[t_{1}-\tau,t_{2}]$ is
attained when $t=t_{1}-\tau$. Therefore $o(t_{1}-\tau)<o(t_{2}-\tau)$,
and we once again have $o(t_{1};\tau)\neq o(t_{2};\tau)$.

For the claim about $\frac{do(t;\tau)}{dt}\neq0$, we note that $\frac{do}{dt}$
cannot equal zero at both $t$ and $t-\tau$, because $\tau<\mu$.
\end{proof}
With Lemma \ref{lem:note1-lem4}, the local argument for embedding
periodic signals is partly complete. Globalizing the argument will
involve additional perturbations, which we now define.

Let $\lambda$ be a $C^{\infty}$ bump function with $\lambda(x)=1$
for $|x|\leq1/2$, $\lambda(x)=0$ for $|x|\geq1$, and $\lambda(x)\in[0,1]$
for all $x\in\mathbb{R}$. Let $h=\tau/2$ and $j\in\mathbb{Z}$.
Define 
\[
\lambda_{j}(t)=\lambda\left(\frac{t-jh}{h}\right)
\]
for $j=0,1,\ldots,n$ and $n=\left\lfloor T/h\right\rfloor $. We
interpret $t$ modulo $T$ and regard $\lambda_{j}(t)$ as a periodic
signal with the circular domain $[0,T)$: a pulse of period $T$ and
width $h$ centered at $jh$ which is equal to $1$ for $|t-jh|\leq h/2$.
We now consider the perturbation
\begin{equation}
o_{\epsilon}(t)=o(t)+\epsilon_{0}\lambda_{0}(t)+\epsilon_{1}\lambda_{1}(t)+\cdots+\epsilon_{n}\lambda_{n}(t),\label{eq:o-perturb}
\end{equation}
where $\epsilon=(\epsilon_{0},\ldots,\epsilon_{n})\in\mathbb{R}^{n+1}$.
For any $t_{0}\in[0,T)$, there exists a bump function $\lambda_{j}(t)$
with $0\leq j\leq n$ such that $\lambda_{j}(t_{0})=1$ and therefore
$\lambda_{j}(t)=0$ if $|t-t_{0}|\geq\tau=2h$.

Before we turn to the global argument, we must prove that the local
structure asserted by Lemma \ref{lem:note1-lem4} is preserved when
$o$ is perturbed to $o_{\epsilon}$ as in (\ref{eq:o-perturb}).
The lemma below guarantees $o_{\epsilon}(t_{1};\tau)\neq o_{\epsilon}(t_{2};\tau)$
for $|t_{1}-t_{2}|\leq3\tau$. The bound $3\tau$ ensures that $o_{\epsilon}(t_{1};\tau)=o_{\epsilon}(t_{2};\tau)$
can happen only when the intervals $[t_{1}-2\tau,t_{1}]$ and $[t_{2}-2\tau,t_{2}]$
do not overlap.
\begin{lem}
Let $o\in\mathcal{O}^{r}$, $r\geq2$, be a periodic signal defined
over the domain $[0,T)$ and with minimum interval of strict monotonicity
equal to $\mu$. Assume that $0$ is a regular value of $do/dt$.
There exists $\epsilon_{0}$ such that if $||\epsilon||\leq\epsilon_{0}$,
then for the perturbation defined by (\ref{eq:o-perturb}) and delay
$\tau$ satisfying $0<\tau<\mu/12$, we have $o_{\epsilon}(t_{1};\tau)\neq o_{\epsilon}(t_{2};\tau)$
for all $(t_{1},t_{2})$ with $|t_{1}-t_{2}|\leq3\tau$. In addition,
$0$ remains a regular value of $\frac{do_{\epsilon}}{dt}$.\label{lem:note1-lem5}\end{lem}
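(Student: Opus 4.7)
The plan is to split the punctured strip $\{(t_1,t_2) : 0 < |t_1-t_2| \leq 3\tau\}$ on the circle $[0,T)^2$ into a thin neighborhood of the diagonal, handled by a mean value/immersion estimate, and its complement, handled by a uniform continuity argument against the unperturbed baseline from Lemma~\ref{lem:note1-lem4}. First, since $\tau < \mu/12$, we have $3\tau < \mu/4 < \mu/3$, so Lemma~\ref{lem:note1-lem4} applies to the unperturbed $o$ and yields $o(t_1;\tau) \neq o(t_2;\tau)$ on the entire strip $0 < |t_1-t_2| \leq 3\tau$, together with $\frac{do(t;\tau)}{dt} \neq 0$ for every $t$. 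This is the structure I intend to preserve under the perturbation.

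Next I would control the size of the perturbation. Because each $\lambda_j$ has support in an interval of width $2h = \tau$ and the centers are spaced $h$ apart, at most a bounded number (independent of $n$) of the $\lambda_j$ overlap at any point, and $\|\lambda_j\|_{C^2}$ depends only on $\lambda$ and on $\tau$. This gives a constant $K = K(\lambda,\tau)$ with $\|o_\epsilon - o\|_{C^2} \leq K \|\epsilon\|_\infty$. The regular-value statement then follows in standard fashion: by Lemma~\ref{lem:note1-lem1} there are only finitely many critical points of $o$, and at each of them $d^2o/dt^2 \neq 0$, so an implicit function / continuity argument produces a unique nondegenerate critical point of $o_\epsilon$ in each small neighborhood once $\|\epsilon\|_\infty$ is small. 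Outside these neighborhoods $|do/dt|$ is bounded below by some $\eta > 0$, and the $C^1$ bound rules out any additional zeros of $do_\epsilon/dt$.

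For the injectivity claim, let $v(t) = (o'(t), o'(t-\tau), o'(t-2\tau))$ and set $m_v = \min_t \|v(t)\| > 0$ by the first step. By uniform continuity of $v$, choose $\delta > 0$ so that $\|v(s)-v(t)\| \leq \tfrac{1}{4} m_v$ whenever $|s-t| \leq \delta$. For $\|\epsilon\|_\infty$ small enough, $v_\epsilon$ satisfies the same inequality with constant $\tfrac{1}{2} m_v$ and has $\|v_\epsilon(t)\| \geq \tfrac{1}{2} m_v$ everywhere. Integrating $v_\epsilon$ coordinate-wise from $t_2$ to $t_1$ and applying the triangle inequality then gives $\|o_\epsilon(t_1;\tau) - o_\epsilon(t_2;\tau)\| \geq \tfrac{1}{4} m_v |t_1-t_2|$ for $0 < |t_1-t_2| \leq \delta$, which is nonzero. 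On the compact complementary region $\delta \leq |t_1-t_2| \leq 3\tau$, the continuous function $\|o(t_1;\tau) - o(t_2;\tau)\|$ attains a positive minimum $m_0 > 0$ by the first step, and the bound $\|o_\epsilon(\cdot;\tau) - o(\cdot;\tau)\|_\infty \leq 3K\|\epsilon\|_\infty$ transfers this estimate to $o_\epsilon$ once $6K\|\epsilon\|_\infty < m_0$. Setting $\epsilon_0$ smaller than every threshold collected along the way completes the proof.

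The main obstacle is that the constants $\delta$, $m_v$, and $m_0$ must be chosen uniformly as $\epsilon$ varies, and in particular independently of the dimension $n+1$ of the perturbation parameter. I would handle this by pinning every constant to the unperturbed signal $o$ (whose data is fixed once $\tau$ is) and only afterwards shrinking $\epsilon_0$; the bounded overlap of the $\lambda_j$ is what keeps the constant $K$ independent of $n$.
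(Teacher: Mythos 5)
Your argument is correct in substance, but for the injectivity half it takes a genuinely different route from the paper. The paper's proof never compares $o_\epsilon$ to $o$ on the strip directly: instead it shows that the \emph{monotonicity structure} of $o$ survives the perturbation --- each compact neighborhood $K_i$ of a critical point keeps a nonvanishing second derivative, each complementary interval $K_i'$ keeps a nonvanishing first derivative, so $o_\epsilon$ has minimum interval of strict monotonicity at least $3\mu/4$ --- and then re-applies Lemma \ref{lem:note1-lem4} verbatim to $o_\epsilon$. You instead prove stability of the \emph{conclusion} of Lemma \ref{lem:note1-lem4}: an immersion estimate near the diagonal (lower bound on $\Vert\frac{d}{dt}o_\epsilon(t;\tau)\Vert$ plus uniform continuity) and a compactness estimate on $\delta\leq|t_1-t_2|\leq3\tau$. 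Both work; the paper's version hands the later lemmas structural information about $o_\epsilon$ (intervals of monotonicity), while yours hands them quantitative separation bounds and makes the immersivity of the perturbed delay map explicit, and is essentially the same stability argument the paper reuses in Lemma \ref{lem:note1-lem7}. The regular-value half of your argument coincides with the paper's. One small repair: with $\Vert v_\epsilon(s)-v_\epsilon(t)\Vert\leq\tfrac{1}{2}m_v$ and $\Vert v_\epsilon(t_2)\Vert\geq\tfrac{1}{2}m_v$ the triangle inequality only gives a lower bound of $0$, not $\tfrac{1}{4}m_v|t_1-t_2|$; choose $\delta$ so that the modulus of continuity of $v$ is at most $\tfrac{1}{8}m_v$ (and $\Vert v_\epsilon-v\Vert_\infty\leq\tfrac{1}{16}m_v$) and the estimate goes through.
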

\begin{proof}
By assumption the periodic signal $o(t)$ has finitely many critical
points. Let $t_{1}<t_{2}<\cdots<t_{k}$ be the critical points in
the circular interval $[0,T)$; at these points and only at these,
we have $do/dt=0$. Since $0$ is a regular value of $do/dt$, we
have $\frac{d^{2}o(t_{j})}{dt^{2}}\neq0$ for $j=1,\ldots,k$.

In the circle $[0,T)$, choose compact intervals $K_{i}=[t_{i}-\delta,t_{i}+\delta]$,
$i=1,\ldots,k$, such that $\delta<\mu/4$ and $\frac{d^{2}o(t)}{dt^{2}}\neq0$
for any $t\in K_{i}$. By continuity in the perturbing parameters
$\epsilon_{i}$, for sufficiently small $||\epsilon||$ the perturbed
periodic signal (\ref{eq:o-perturb}) also has nonzero second derivative
on $\cup K_{i}$.

Define the interval $K_{i}'$ to be $[t_{i}+\delta/2,t_{i+1}-\delta/2]$
($K_{k}'$ wraps around the circle). Each $K_{k}'$ is an interval
of strict monotonicity. By compactness, $|do/dt|$ attains a minimum
strictly greater than $0$ over $\cup K_{i}'$. Again by continuity,
any perturbation of the form (\ref{eq:o-perturb}) with $||\epsilon||$
sufficiently small also has nonzero derivative over $\cup K_{i}'$. 

Thus, for $||\epsilon||$ sufficiently small, $K_{i}'$ remain intervals
of strict monotonicity for the perturbed periodic signal, and each
$K_{i}$ can contain at most one critical point of the perturbed periodic
signal. The minimum interval of strict monotonicity is at least $\mu-\delta\geq3\mu/4$.
We now apply Lemma \ref{lem:note1-lem4} to infer that $0<\tau\leq\mu/4$
implies $o_{\epsilon}(t_{1};\tau)\neq o_{\epsilon}(t_{2};\tau)$ for
$0<|t_{1}-t_{2}|\leq\mu/4$. We limit $\tau$ to the interval $(0,\mu/12)$
to complete the proof. 
\end{proof}

\subsection{Global argument for periodic signals}

The global argument relies on the parametric transversality theorem
\cite{Hirsch2012,Robinson1998}. 
\begin{lem}
Let $o\in\mathcal{O}^{r}$, $r\geq2$, be a periodic signal defined
over the circle $[0,T)$. There exists an arbitrarily small perturbation
of the periodic signal $o$ to $o'$, with the same period, and a
delay $\tau>0,$ such that $t\rightarrow o'(t;\tau)$ is an embedding,
with $0$ a regular value of $do'/dt$.\label{lem:note1-lem6}\end{lem}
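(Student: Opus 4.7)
The plan is to build on Lemmas \ref{lem:note1-lem3} and \ref{lem:note1-lem5} and finish the argument with a single application of the parametric transversality theorem. Those lemmas already give, after an arbitrarily small preliminary perturbation of $o$, that $0$ is a regular value of $do/dt$ with finite critical set and minimum interval of strict monotonicity $\mu>0$; and, for any fixed delay $\tau\in(0,\mu/12)$ and the $(n+1)$-parameter family $o_{\epsilon}$ of (\ref{eq:o-perturb}), every sufficiently small $\epsilon$ already enforces injectivity of $t\mapsto o_{\epsilon}(t;\tau)$ on nearby pairs $0<|t_{1}-t_{2}|\leq 3\tau$ together with nonvanishing of $do_{\epsilon}(t;\tau)/dt$. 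In particular $o_{\epsilon}$ is automatically immersive, so all that remains is to eliminate ``global'' collisions $o_{\epsilon}(t_{1};\tau)=o_{\epsilon}(t_{2};\tau)$ with circular distance between $t_{1}$ and $t_{2}$ at least $3\tau$.

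To handle the global collisions, I would let $B\subset\mathbb{R}^{n+1}$ be a small open ball about the origin on which the conclusion of Lemma \ref{lem:note1-lem5} already holds, let $D$ be the compact subset of the torus $[0,T)\times[0,T)$ consisting of pairs $(t_{1},t_{2})$ whose circular distance is at least $3\tau$, and consider the parametric map
\[
F:D\times B\longrightarrow\mathbb{R}^{3},\qquad F(t_{1},t_{2},\epsilon)=o_{\epsilon}(t_{1};\tau)-o_{\epsilon}(t_{2};\tau).
\]
The set $D$ is a compact $2$-manifold with boundary, and the global collisions are precisely the zeros of $F$. The key claim is that $F$ is transverse to the point $0\in\mathbb{R}^{3}$.

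To prove this claim, fix any $(t_{1},t_{2},\epsilon)\in F^{-1}(0)$ and consider the six evaluation points $P=\{t_{1}-k\tau,\,t_{2}-k\tau:k=0,1,2\}$ on the circle. Within each triple the points are separated by $\tau$ and $2\tau$; between the two triples the bound on circular distance combined with $\tau<\mu/12\leq T/24$ (using that any nonconstant periodic signal has $\mu\leq T/2$) makes every cross-triple circular separation at least $\tau$. Each bump $\lambda_{j}$ equals $1$ on $[jh-h/2,jh+h/2]$ and vanishes outside $[jh-h,jh+h]$ with $h=\tau/2$, so for each $k\in\{0,1,2\}$, choosing $j_{k}$ with $|t_{1}-k\tau-j_{k}h|\leq h/2$ makes $\lambda_{j_{k}}$ equal to $1$ at $t_{1}-k\tau$ and equal to $0$ at the other five points of $P$, since those sit at distance more than $h$ from $j_{k}h$ by a triangle inequality using the separation bound $\tau>3h/2$. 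Differentiating $F$ in $\epsilon_{j_{k}}$ then gives $\partial F/\partial\epsilon_{j_{k}}=e_{k+1}$, the $(k+1)$st standard basis vector of $\mathbb{R}^{3}$, so $DF$ is surjective and the transversality claim holds.

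With $F$ transverse to $\{0\}$ in hand, the parametric transversality theorem (see Chapter 3 of \cite{Hirsch2012}) produces a full-measure set of $\epsilon\in B$ for which the slice $F_{\epsilon}=F(\cdot,\cdot,\epsilon):D\to\mathbb{R}^{3}$ is itself transverse to $\{0\}$; since $\dim D=2<3$, this forces $F_{\epsilon}^{-1}(0)=\emptyset$. Choosing any such $\epsilon$ of arbitrarily small norm and setting $o'=o_{\epsilon}$ then gives the required periodic signal: $t\mapsto o'(t;\tau)$ is injective on nearby pairs by Lemma \ref{lem:note1-lem5}, injective on well-separated pairs by emptiness of $F_{\epsilon}^{-1}(0)$, and immersive throughout, hence an embedding of the compact circle $[0,T)$ into $\mathbb{R}^{3}$, with $0$ a regular value of $do'/dt$ by the final assertion of Lemma \ref{lem:note1-lem5}. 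The principal technical hurdle is the bump-isolation step inside the transversality claim: one must track that the minimum pairwise circular separation among the six points of $P$ really exceeds $3h/2=3\tau/4$, and it is exactly this bookkeeping that pins down the constants $3\tau$ and $h=\tau/2$ chosen in Lemma \ref{lem:note1-lem5}.
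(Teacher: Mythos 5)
Your proof is correct and follows essentially the same route as the paper: an initial perturbation via Lemmas \ref{lem:note1-lem3} and \ref{lem:note1-lem5} to secure immersivity and injectivity on nearby pairs, followed by a parametric transversality argument using the bump-function family (\ref{eq:o-perturb}) to eliminate distant collisions. The only cosmetic differences are that you use the difference map into $\mathbb{R}^{3}$ transverse to $\{0\}$ where the paper uses the pair map into $\mathbb{R}^{6}$ transverse to the diagonal, and you work on the compact set $|t_{1}-t_{2}|\geq3\tau$ (a manifold \emph{with} boundary, so Hirsch's theorem also requires transversality of the boundary restriction --- which your surjectivity verification, valid at every zero of $F$ including boundary points, does supply) where the paper deliberately takes the open set $|t_{1}-t_{2}|>3\tau$ to avoid the boundary.
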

\begin{proof}
By Lemma \ref{lem:note1-lem3}, we may make an initial perturbation
to $o$ if necessary and assume that $o$ has finitely many critical
points, that $0$ is a regular value of $do/dt$, and that $\mu>0$
is the minimum width of an interval of strict monotonicity.

Now consider perturbations of $o$ to $o_{\epsilon}$ of the form
(\ref{eq:o-perturb}). By Lemma \ref{lem:note1-lem5}, we may assume
$o_{\epsilon}(t_{1};\tau)\neq o_{\epsilon}(t_{2};\tau)$ for $t_{1}\neq t_{2}$
and $|t_{1}-t_{2}|\leq3\tau$ for $\tau<\mu/12$, \emph{provided}
$||\epsilon||$ is sufficiently small.

Consider the set 
\[
\mathcal{T}=\left\{ (t_{1},t_{2})\Bigl||t_{1}-t_{2}|>3\tau,\:t_{1}\in[0,T),\:t_{2}\in[0,T)\right\} ,
\]
where $[0,T)$ is interpreted as the circle, as before. For the applicability
of the parametric transversality theorem later in the proof, it is
important to note that $\mathcal{T}$ is a manifold of dimension $2$
\emph{without} a boundary. 

Consider $\left(o_{\epsilon}(t_{1};\tau),o_{\epsilon}(t_{2};\tau)\right)$
as a function from the domain $\left\{ (\epsilon_{1},\ldots\epsilon_{n})\right\} \times\mathcal{T}$
to $\mathbb{R}^{6}=\mathbb{R}^{3}\times\mathbb{R}^{3}$. We will now
verify that this function is transverse to the diagonal in $\mathbb{R}^{3}\times\mathbb{R}^{3}$.
If $o_{\epsilon}(t_{1};\tau)\neq o_{\epsilon}(t_{2};\tau)$ there
is nothing to prove. Suppose $o_{\epsilon}(t_{1};\tau)=o_{\epsilon}(t_{2};\tau)$
and consider the point in $\mathbb{R}^{6}$ given by 
\[
\left(o_{\epsilon}(t_{1}),o_{\epsilon}(t_{1}-\tau),o_{\epsilon}(t_{1}-2\tau),o_{\epsilon}(t_{2}),o_{\epsilon}(t_{2}-\tau),o_{\epsilon}(t_{2}-2\tau)\right)
\]
The intervals $[t_{1}-2\tau,t_{1}]$ and $[t_{2}-2\tau,t_{2}]$ are
disjoint because $|t_{1}-t_{2}|>3\tau$. By construction, there exist
$i_{1},i_{2},i_{3},i_{4},i_{5},i_{6}$ such that $\lambda_{i_{1}},\lambda_{i_{2}},\lambda_{i_{3}},\lambda_{i_{4}},\lambda_{i_{5}},\lambda_{i_{6}}$
are each equal to $1$ at exactly one of the six points $t_{1},t_{1}-\tau,t_{1}-2\tau,t_{2},t_{2}-\tau,t_{2}-2\tau$
and zero at the others. If the tangent direction in the domain is
taken to perturb $\epsilon_{i_{j}}$ for $j\in\{1,\ldots,6\}$, it
maps to a perturbation of the $j$-th coordinate in $\mathbb{R}^{6}$,
more precisely the elementary vector $\boldsymbol{e}_{j}$. Therefore,
the tangent map is surjective and transversality is verified.

By the parametric transversality theorem {[}Hirsch, Chapter 3, Theorem
2.7{]}, we may choose $\epsilon$ arbitrarily small such that $\left(o_{\epsilon}(t_{1};\tau),o_{\epsilon}(t_{2};\tau)\right)$
considered as a function from $\mathcal{T}$ to $\mathbb{R}^{6}$
is transverse to the diagonal of $\mathbb{R}^{3}\times\mathbb{R}^{3}$.
Since $\mathcal{T}$ is of dimension $2$, that can only happen if
$o_{\epsilon}(t_{1};\tau)\neq o(t_{2};\tau)$ for $(t_{1},t_{2})\in\mathcal{T}$.

To complete the proof, we only need to check the smoothness/dimension
condition in the parametric transversality theorem. The dimension
of $\mathcal{T}$ is $2$ and the codimension of the diagonal in $\mathbb{R}^{6}$
is $3$. Thus, it is sufficient if the map from $\left\{ (\epsilon_{1},\ldots\epsilon_{n})\right\} \times\mathcal{T}$
to $\mathbb{R}^{6}$ is $C^{1}$ which it is.\end{proof}
\begin{lem}
Let $o\in\mathcal{O}^{r}$, $r\geq2$, be a periodic signal such that
$t\rightarrow o(t;\tau)$ is an embedding of the circle $[0,T)$ in
$\mathbb{R}^{3}$ for delay $\tau>0$. There exists $\epsilon_{0}>0$
such that $d_{r}(o,o')<\epsilon_{0}$ and $T=T'$ (perturbation has
same period) imply that $t\rightarrow o'(t;\tau)$ is also an embedding
of the circle $[0,T)$.\label{lem:note1-lem7}\end{lem}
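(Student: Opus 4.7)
The plan is to establish the embedding property as $C^{r}$-open (for perturbations of the same period) by splitting the analysis on $[0,T)\times[0,T)$ into a neighborhood of the diagonal and its compact complement. Write $g(t)=o(t;\tau)$ and $\tilde{g}(t)=o'(t;\tau)$. Because $[0,T)$ is compact and $\mathbb{R}^{3}$ is Hausdorff, any continuous injective immersion of the circle is automatically an embedding, so I need only verify that $\tilde{g}$ is injective and an immersion for $d_{r}(o,o')$ sufficiently small.

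For the immersion property, $\frac{dg}{dt}$ is continuous and nowhere zero on the compact circle by hypothesis, so $\norm{dg/dt}\ge c_{0}>0$ uniformly. Since the $C^{1}$ part of $d_{r}$ controls $\frac{d\tilde{g}}{dt}-\frac{dg}{dt}$ uniformly, for $d_{r}(o,o')<\epsilon_{1}$ small enough I retain $\norm{d\tilde{g}/dt}\ge c_{0}/2$. For local injectivity near the diagonal, I would use the second-order Taylor estimate
\[
\tilde{g}(t_{1})-\tilde{g}(t_{2})=\frac{d\tilde{g}}{dt}(t_{2})(t_{1}-t_{2})+R,\qquad \norm{R}\le \tfrac{1}{2}M(t_{1}-t_{2})^{2},
\]
where $M$ uniformly bounds $\norm{d^{2}\tilde{g}/dt^{2}}$ over the $d_{r}$-ball of radius $\epsilon_{1}$ (this is where $r\ge2$ enters). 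Choosing $\eta=c_{0}/(4M)$ yields $\norm{\tilde{g}(t_{1})-\tilde{g}(t_{2})}\ge(c_{0}/4)\abs{t_{1}-t_{2}}$ whenever $t_{1},t_{2}$ lie within circle-distance at most $\eta$.

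For the complementary region, consider the compact set $K=\{(t_{1},t_{2}):\text{circle-distance}(t_{1},t_{2})\ge\eta\}$. The continuous function $\norm{g(t_{1})-g(t_{2})}$ is strictly positive on $K$ by the injectivity of $g$, and hence attains a positive minimum $m>0$ by compactness. Since the $C^{0}$ part of $d_{r}$ bounds $\norm{g-\tilde{g}}_{\infty}$, for $d_{r}(o,o')<\epsilon_{2}$ small enough I have $\norm{\tilde{g}(t_{1})-\tilde{g}(t_{2})}\ge m/2>0$ on all of $K$. Setting $\epsilon_{0}=\min(\epsilon_{1},\epsilon_{2})$ concludes the argument.

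The main obstacle will be choosing the local scale $\eta$ uniformly so that injectivity on short arcs holds simultaneously for $g$ and for every sufficiently small perturbation; the uniform upper bound on second derivatives provided by the $C^{r}$ topology with $r\ge2$ is precisely what makes this achievable, and it is why the smoothness hypothesis cannot be weakened to mere $C^{1}$ without additional work on moduli of continuity.
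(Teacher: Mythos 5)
Your proof is correct and follows the same overall decomposition as the paper's: injectivity is checked separately near the diagonal of $[0,T)\times[0,T)$ and on its compact complement, with immersivity and the off-diagonal separation both obtained from compactness and continuity, and with the observation (implicit in the paper, explicit in your write-up) that an injective immersion of the compact circle into Hausdorff $\mathbb{R}^{3}$ is automatically an embedding. The only real difference is in the near-diagonal step. The paper invokes a parametrized inverse function theorem, giving local injectivity of $t\rightarrow o'(t;\tau)$ uniformly over a weak $C^{r}$ neighborhood of $o(\cdot;\tau)$, and then a Lebesgue-number argument to extract the uniform scale; you instead obtain the uniform scale directly from a second-order Taylor expansion, combining the lower bound $c_{0}/2$ on $\norm{d\tilde{g}/dt}$ with the uniform bound $M$ on second derivatives over the $d_{r}$-ball. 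Your route is more elementary and quantitative --- it yields the explicit lower Lipschitz constant $c_{0}/4$ on arcs of length at most $c_{0}/(4M)$ --- at the cost of genuinely using $r\geq2$, whereas the cited inverse function theorem delivers the local step with only $C^{1}$ control, as you note at the end. Since the lemma hypothesizes $r\geq2$, nothing is lost, and the argument is complete.
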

\begin{proof}
By the inverse function theorem (see \cite[Appendix]{Hirsch2012}),
there exists $\epsilon_{0}>0$ such that for every $\tilde{t}\in[0,T)$
there exists a neighborhood of $\tilde{t}$ over which $t\rightarrow o'(t;\tau)$
is an injection if $d_{r}(o',o)<\epsilon_{0}$ and $T=T'$. Using
a Lebesgue-$\delta$ argument we may assume that $o'(t_{1};\tau)\neq o'(t_{2};\tau)$
for $0<|t_{1}-t_{2}|<\epsilon_{0}$, making $\epsilon_{0}$ smaller
if necessary.

Although arguments like the one above are common in differential topology,
we state the version of the inverse function theorem invoked for clarity.
The version used is as follows. Suppose $f$ is a $C^{r}$ map from
$U$, an open subset of $\mathbb{R}^{m}$ to $V$, an open subset
of $\mathbb{R}^{n}$ with $m<n$. Suppose $f(x)=y$ and that the tangent
map $\frac{\partial f}{\partial x}$ is injective at $x$. Then there
exists a neighborhood $\mathcal{N}$ of $f$ in the weak $C^{r}$
topology ($r\geq1$), a neighborhood $U'$ of $x$, $V'$ of $y$,
and $W'$ of $0\in\mathbb{R}^{n-m}$, such that for every $g\in\mathcal{N}$
there exists a diffeomorphism $G:V'\rightarrow U'\times W'$ with
$G^{-1}$ restricted to $U'\times0$ coinciding with $g$. This theorem
is applied with $m=1$ and $n=3$.

The rest of the proof is a standard compactness argument. Let 
\[
\min_{|t_{1}-t_{2}|\geq\epsilon_{0}}|o(t_{1};\tau)-o(t_{2};\tau)|=\delta>0,
\]
where the minimum exists because of compactness and is greater than
$0$ because $t\rightarrow o(t;\tau)$ is an embedding. By continuity,
the minimum must be positive for $o'$ sufficiently close to $o$.
Similarly, immersivity of $o'$ sufficiently close to $o$ is a direct
consequence of compactness of the circle. Thus, $t\rightarrow o'(t;\tau)$
is also an embedding.\end{proof}
\begin{thm}
The set of periodic signals $o\in\mathcal{O}^{r}$, $r\geq2$, for
which there exists a delay $\tau>0$ such that $t\rightarrow o(t;\tau)$
is an embedding of the circle $[0,T)$ in $\mathbb{R}^{3}$ is open
and dense in $\mathcal{O}^{r}$. \label{thm:note1-thm8}\end{thm}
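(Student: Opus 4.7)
The theorem combines density and openness in $\mathcal{O}^r$, and each part reduces cleanly to one of the preceding lemmas, modulo a rescaling argument to handle the period mismatch.

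\textbf{Density} is immediate from Lemma~\ref{lem:note1-lem6}: given any $o' \in \mathcal{O}^r$, that lemma produces an arbitrarily small perturbation $o$ of $o'$ with the same period such that $t \mapsto o(t;\tau)$ is an embedding for some $\tau > 0$. Since the period is preserved by the perturbation, the $|T-T'|$ term in $d_r$ vanishes, so $o$ may be chosen with $d_r(o, o')$ arbitrarily small. The set in question therefore contains arbitrarily close approximants to every point of $\mathcal{O}^r$.

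\textbf{Openness} is where the nontrivial work lies, because Lemma~\ref{lem:note1-lem7} only ensures persistence of the embedding property under perturbations with the same period, whereas the metric $d_r$ allows the period to vary. The plan is to absorb a small change of period by a linear time-rescaling and then invoke Lemma~\ref{lem:note1-lem7}. Suppose $o \in \mathcal{O}^r$ has period $T$ and $t \mapsto o(t;\tau)$ is an embedding. Given any $o' \in \mathcal{O}^r$ of period $T'$ close to $T$, define the rescaled signal
\[
\tilde{o}(t) = o'\!\left(\tfrac{T'}{T}\, t\right), \qquad t \in [0,T),
\]
which is a periodic signal of period $T$. A direct computation gives $\tilde{o}^{(k)}(t) = (T'/T)^k\, o'^{(k)}(tT'/T)$, so for $0 \leq s < 1$,
\[
\bigl|\tilde{o}^{(k)}(sT) - o^{(k)}(sT)\bigr| \leq \bigl|o'^{(k)}(sT') - o^{(k)}(sT)\bigr| + \bigl|(T'/T)^k - 1\bigr| \cdot \bigl|o'^{(k)}(sT')\bigr|.
\]
The first term is bounded by $d_r(o,o')$, and since $\|o'\|_r \leq \|o\|_r + d_r(o,o')$ is controlled, the second term tends to $0$ as $d_r(o,o') \to 0$. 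Hence $\tilde{o}$ is $C^r$-close to $o$ as periodic signals of period $T$, and Lemma~\ref{lem:note1-lem7} applies: for $d_r(o,o')$ sufficiently small, $t \mapsto \tilde{o}(t;\tau)$ is an embedding of $[0,T)$ into $\mathbb{R}^3$.

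Finally, the substitution $u = tT'/T$ and $\tau' = \tau T'/T$ converts the embedding of $\tilde{o}$ with delay $\tau$ into an embedding $u \mapsto o'(u;\tau')$ of $[0,T')$, proving that the embedding property persists under all sufficiently small $d_r$-perturbations and completing openness.

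The only real obstacle is the rescaling bookkeeping above; once one recognizes that Lemma~\ref{lem:note1-lem6} already delivers density and Lemma~\ref{lem:note1-lem7} already delivers openness in the constant-period slice of $\mathcal{O}^r$, the theorem reduces to checking that a small change of period induces a small $C^r$-perturbation after the natural reparametrization.
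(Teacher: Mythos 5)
Your proposal is correct and follows essentially the same route as the paper: density from Lemma~\ref{lem:note1-lem6}, openness from Lemma~\ref{lem:note1-lem7} after the rescaling $o''(t)=o'(tT'/T)$ with $\tilde{\tau}=\tau T'/T$. The only difference is that you spell out the estimate showing $d_r(o,o'')\to 0$, which the paper merely asserts.
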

\begin{proof}
By Lemma \ref{lem:note1-lem6}, there exists an arbitrarily small
perturbation to $o'$ such that $t\rightarrow o'(t;\tau)$ is an embedding
for $0<\tau<\tau_{0}$ and with $0$ a regular value of $do'/dt$.
Thus the set of periodic signals with a delay embedding and with $0$
a regular value of $do/dt$ is dense. We only have to prove that the
set is open.

Given periodic signal $o$ with $t\rightarrow o(t;\tau)$ an embedding,
Lemma \ref{lem:note1-lem7} shows that $t\rightarrow o'(t;\tau)$
remains an embedding for $d_{r}(o,o')$ sufficiently small if $T=T'$.
If $T\neq T'$, we may still apply Lemma \ref{lem:note1-lem7}, by
defining $o''(t)=o'(tT'/T)$ which is a periodic signal of period
$T$. If $d_{r}(o,o')\rightarrow0$ ,then $d_{r}(o,o'')\rightarrow0$.
Finally, $t\rightarrow o''(t;\tau)$ is an embedding implies that
$t\rightarrow o'(t;\tilde{\tau})$ is an embedding with $\tilde{\tau}=\tau T'/T$. \end{proof}
\begin{thm}
Suppose that $o\in\mathcal{O}^{r}$, $r\geq2$, and that $t\rightarrow o(t;\tau)$
is an embedding of the circle for some delay $\tau>0$. Then $t\rightarrow o(t;\tau')$
remains an embedding if $\tau'$ is close enough to $\tau$.\end{thm}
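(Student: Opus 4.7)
The plan is to mimic the proof of Lemma \ref{lem:note1-lem7}, using the fact that the map $(t,\tau') \mapsto o(t;\tau') = (o(t), o(t-\tau'), o(t-2\tau'))$ is jointly $C^1$ on $[0,T) \times [\tau/2, 3\tau/2]$, so that $\tau' \to \tau$ plays the role there played by $o' \to o$. As in that lemma, I would split injectivity into a local part handled by the inverse function theorem and a global part handled by compactness, and separately verify immersivity.

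\textbf{Immersivity.} At $\tau' = \tau$, the tangent vector $\frac{d}{dt} o(t;\tau) = (o'(t), o'(t-\tau), o'(t-2\tau))$ is nonzero for every $t \in [0,T)$ since $o(\cdot;\tau)$ is an embedding, so by compactness of the circle its Euclidean norm has a positive minimum $m > 0$. Joint continuity of this vector in $(t,\tau')$ on the compact set $[0,T) \times [\tau/2, 3\tau/2]$ implies that for $\tau'$ in a sufficiently small neighborhood of $\tau$, the norm stays above $m/2$, so $o(\cdot;\tau')$ is immersive.

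\textbf{Local injectivity.} Applying the inverse function theorem (in the form quoted in the proof of Lemma \ref{lem:note1-lem7}) to the injective immersion $t \mapsto o(t;\tau)$ at each point of $[0,T)$, and using the openness of injective immersions in the $C^1$ topology together with a Lebesgue-number argument on a finite subcover, I obtain $\eta > 0$ and a neighborhood $U$ of $\tau$ such that $o(t_1;\tau') \neq o(t_2;\tau')$ whenever $\tau' \in U$ and $0 < |t_1 - t_2| < \eta$. The key point is that the perturbation in $\tau'$ is small in the $C^1$ topology jointly with the perturbation in $t$, so the inverse function theorem applies uniformly.

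\textbf{Global injectivity.} The set $K = \{(t_1,t_2) \in [0,T)^2 : |t_1 - t_2| \geq \eta\}$ (with circular distance) is compact, and the continuous function $(t_1,t_2) \mapsto |o(t_1;\tau) - o(t_2;\tau)|$ attains a minimum $\delta > 0$ on $K$ because $o(\cdot;\tau)$ is an embedding. By joint continuity of $(t_1,t_2,\tau') \mapsto |o(t_1;\tau') - o(t_2;\tau')|$ on the compact set $K \times [\tau/2, 3\tau/2]$, for $\tau'$ close enough to $\tau$ this quantity remains at least $\delta/2 > 0$ throughout $K$. Combined with the local part and immersivity, this shows $o(\cdot;\tau')$ is an injective immersion of the compact circle, hence an embedding.

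The main subtlety, though not a deep one, is in the local injectivity step: one needs the neighborhood size $\eta$ to be uniform in $\tau'$. This is handled by invoking the inverse function theorem for the two-parameter family $(t,\tau') \mapsto o(t;\tau')$ rather than separately for each $\tau'$; the $C^r$ regularity of $o$ with $r \geq 2$ ensures enough joint smoothness for the quoted version of the theorem to apply.
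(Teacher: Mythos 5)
Your proposal is correct and follows essentially the same route as the paper, which simply notes that the arguments of Lemma \ref{lem:note1-lem7} (inverse function theorem for local injectivity, compactness for global injectivity and immersivity) ``apply with little change'' once one observes that varying $\tau'$ perturbs the map $t\mapsto o(t;\tau')$ in the $C^{1}$ topology. You have merely written out in detail what the paper leaves implicit.
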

\begin{proof}
The arguments used in Lemma \ref{lem:note1-lem7} and Theorem \ref{thm:note1-thm8}
apply with little change.
\end{proof}

\section{Embedding periodic orbits in $\mathbb{R}^{3}$}

\begin{figure}
\centering{}\includegraphics[scale=0.4]{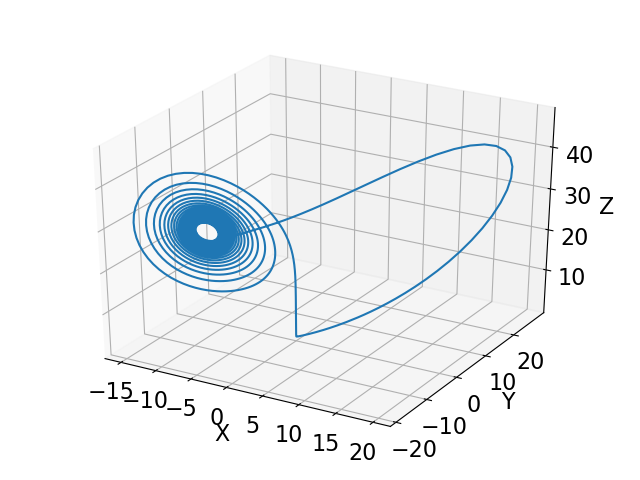}\hspace{1cm}\includegraphics[scale=0.4]{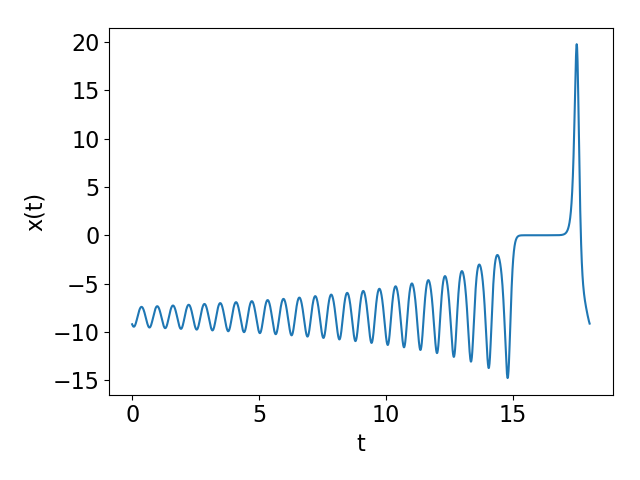}\caption{A periodic orbit of the classical Lorenz system and its $x$-coordinate
as a function of time (over a single period). The periodic orbit shown
is $A^{24}B$ in the nomenclature of \cite{Viswanath2003}.\label{fig:fig-5}}
\end{figure}

Figure \ref{fig:fig-5} shows a periodic orbit of the classical Lorenz
system given by $dx/dt=10(y-x)$, $dy/dt=-y-xz+28x$, $dz/dt=-8z/3+xy$.\footnote{The periodic orbit of Figure \ref{fig:fig-5} in \cite{Viswanath2003}
could not be computed using the techniques of \cite{Viswanath2003}.
It was computed some years later using an initial guess that was constructed
from the periodic orbit $A^{25}B^{25}$. } The signal extracted from that orbit is nearly flat for a significant
duration when the origin is approached.

In this section, we will prove that ``typical'' periodic orbits
(in a sense that will be made precise) yield signals that result in
embeddings of the circle. The following proposition proves that an
embedding using delay coordinates persists when the vector field is
perturbed slightly. It is the easier half of the argument.
\begin{prop}
Let $\frac{dx}{dt}=f(x)$, where $x\in\mathbb{R}^{d}$, $f:U\rightarrow\mathbb{R}^{d}$,
and $U$ an open subset of $\mathbb{R}^{d}$, be a dynamical system
with $f$ a $C^{r-1}$ vector field, $r\geq2$. Let ${\bf p}:[0,T)\rightarrow U$
be a hyperbolic periodic solution of period $T>0$. Let ${\bf a}\in\mathbb{R}^{d}$
and ${\bf a}\neq0$. Assume that $t\rightarrow(\mathbf{a}\cdot\mathbf{p}(t),\mathbf{a}\cdot\mathbf{p}(t-\tau),\mathbf{a}\cdot\mathbf{p}(t-2\tau))$
be an embedding of the circle $[0,T)$ in $\mathbb{R}^{3}$. There
exists an open neighborhood of $f$ in the $C^{r-1}$ topology such
that for each $g$ in that neighborhood, there exists a $C^{r}$-close
hyperbolic periodic solution $\mathbf{p}'(t)$ of period $T'$ of
$\frac{dx}{dt}=g(x)$ and a $\tau'$ close to $\tau$ such that $t\rightarrow(\mathbf{a}\cdot\mathbf{p}'(t),\mathbf{a}\cdot\mathbf{p}'(t-\tau'),\mathbf{a}\cdot\mathbf{p}'(t-2\tau'))$
is an embedding of the circle $[0,T')$ in $\mathbb{R}^{3}$.\label{prop:note1-prop10}\end{prop}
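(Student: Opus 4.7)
The strategy is to reduce everything to Lemma \ref{lem:note1-lem7}, which gives openness of the embedding property in the space of periodic signals $\mathcal{O}^r$. So the task splits into (i) producing the perturbed periodic orbit $\mathbf{p}'$ with controlled regularity, and (ii) verifying that the associated signal $\mathbf{a}\cdot\mathbf{p}'(t)$ is close enough in $d_r$ to $\mathbf{a}\cdot\mathbf{p}(t)$ that Lemma \ref{lem:note1-lem7} applies.

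The first step is standard persistence of hyperbolic periodic orbits. Take a codimension-one section $\Sigma$ transverse to the orbit $\mathbf{p}$ at some base point $x_0=\mathbf{p}(0)$, and consider the Poincaré return map $P_f:\Sigma\to\Sigma$. Hyperbolicity of $\mathbf{p}$ means that $DP_f(x_0)$ has no eigenvalue equal to $1$, so the implicit function theorem applied to $x\mapsto P_g(x)-x$ yields, for every $g$ in a sufficiently small $C^{r-1}$ neighborhood of $f$, a unique fixed point $x_0'$ of $P_g$ near $x_0$, hence a unique hyperbolic periodic orbit $\mathbf{p}'$ of some period $T'$ close to $T$. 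Since $f$ and $g$ are $C^{r-1}$, the flow is $C^{r-1}$ in the spatial variable and $C^r$ in time, and by the standard theorem on $C^r$-dependence of solutions on the vector field, $x_0'$, $T'$, and the map $t\mapsto \mathbf{p}'(t)$ depend continuously on $g$ in the $C^r$ sense. Concretely, as $g\to f$ in $C^{r-1}$ we get $|T'-T|\to 0$ and $\sup_{0\le s<1}\bigl|\mathbf{p}^{(k)}(sT)-\mathbf{p}'^{(k)}(sT')\bigr|\to 0$ for $k=0,\ldots,r$.

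The second step is to translate this to signals. Define $o(t)=\mathbf{a}\cdot\mathbf{p}(t)\in\mathcal{O}^r$ of period $T$ and $o'(t)=\mathbf{a}\cdot\mathbf{p}'(t)\in\mathcal{O}^r$ of period $T'$; by linearity, $d_r(o,o')\to 0$ as $g\to f$ in $C^{r-1}$. Because Lemma \ref{lem:note1-lem7} assumes equal periods, I reuse the rescaling trick from the end of the proof of Theorem \ref{thm:note1-thm8}: put $o''(t)=o'(tT'/T)$, a period-$T$ signal with $d_r(o,o'')\to 0$. By hypothesis $t\mapsto o(t;\tau)$ is an embedding of $[0,T)$, so Lemma \ref{lem:note1-lem7} applied to $o''$ gives $\epsilon_0>0$ with $t\mapsto o''(t;\tau)$ an embedding whenever $d_r(o,o'')<\epsilon_0$. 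Rescaling back, $t\mapsto o'(t;\tau')$ is an embedding with $\tau'=\tau T'/T$, which unpacks to the desired conclusion that $t\mapsto(\mathbf{a}\cdot\mathbf{p}'(t),\mathbf{a}\cdot\mathbf{p}'(t-\tau'),\mathbf{a}\cdot\mathbf{p}'(t-2\tau'))$ is an embedding of $[0,T')$ in $\mathbb{R}^3$.

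There is no real obstacle here: both ingredients are standard. The only thing to be careful about is the bookkeeping of regularity — one must verify that a $C^{r-1}$ perturbation of $f$ yields a $C^r$ perturbation of the orbit-as-a-function-of-time (which works because solving the ODE recovers one derivative), since Lemma \ref{lem:note1-lem7} is stated in the $d_r$ metric on $\mathcal{O}^r$. The shift from $C^{r-1}$ on vector fields to $C^r$ on signals is exactly what the $(r-1,r)$ pairing in the statement is designed to accommodate. Once this is matched up, the proposition follows by combining persistence of hyperbolic orbits with the openness Lemma \ref{lem:note1-lem7}.
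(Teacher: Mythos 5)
Your proposal is correct and follows essentially the same route as the paper: persistence of the hyperbolic periodic orbit under $C^{r-1}$ perturbation of the vector field (which the paper simply cites as standard), followed by the openness of the embedding property for signals via Lemma \ref{lem:note1-lem7} together with the period-rescaling trick and the choice $\tau'=\tau T'/T$, which is exactly what the paper invokes through Theorem \ref{thm:note1-thm8}. Your added remark on the regularity bookkeeping (a $C^{r-1}$ vector field giving a $C^{r}$ orbit-in-time) is a correct and worthwhile observation that the paper leaves implicit.
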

\begin{proof}
The fact that a hyperbolic periodic solution such as $\mathbf{p}$
perturbs to a nearby hyperbolic solution $\mathbf{p}'$ in a small
enough open neighborhood of $f$ is a standard result \cite[Chapter 5]{Robinson2005}.
If the signal $o(t)=\mathbf{a}\cdot\mathbf{p}(t)$ is such that $t\rightarrow o(t;\tau)$
is an embedding of the circle, then $t\rightarrow o'(t;\tau')$ is
also an embedding for $o'(t)=\mathbf{a}\cdot\mathbf{p}'(t;\tau')$
by Theorem \ref{thm:note1-thm8}. The proof of Theorem \ref{thm:note1-thm8}
uses the choice $\tau'=\tau T'/T$.
\end{proof}
\begin{figure}
\begin{centering}
\includegraphics[bb=200bp 350bp 1223bp 758bp,clip,scale=0.3]{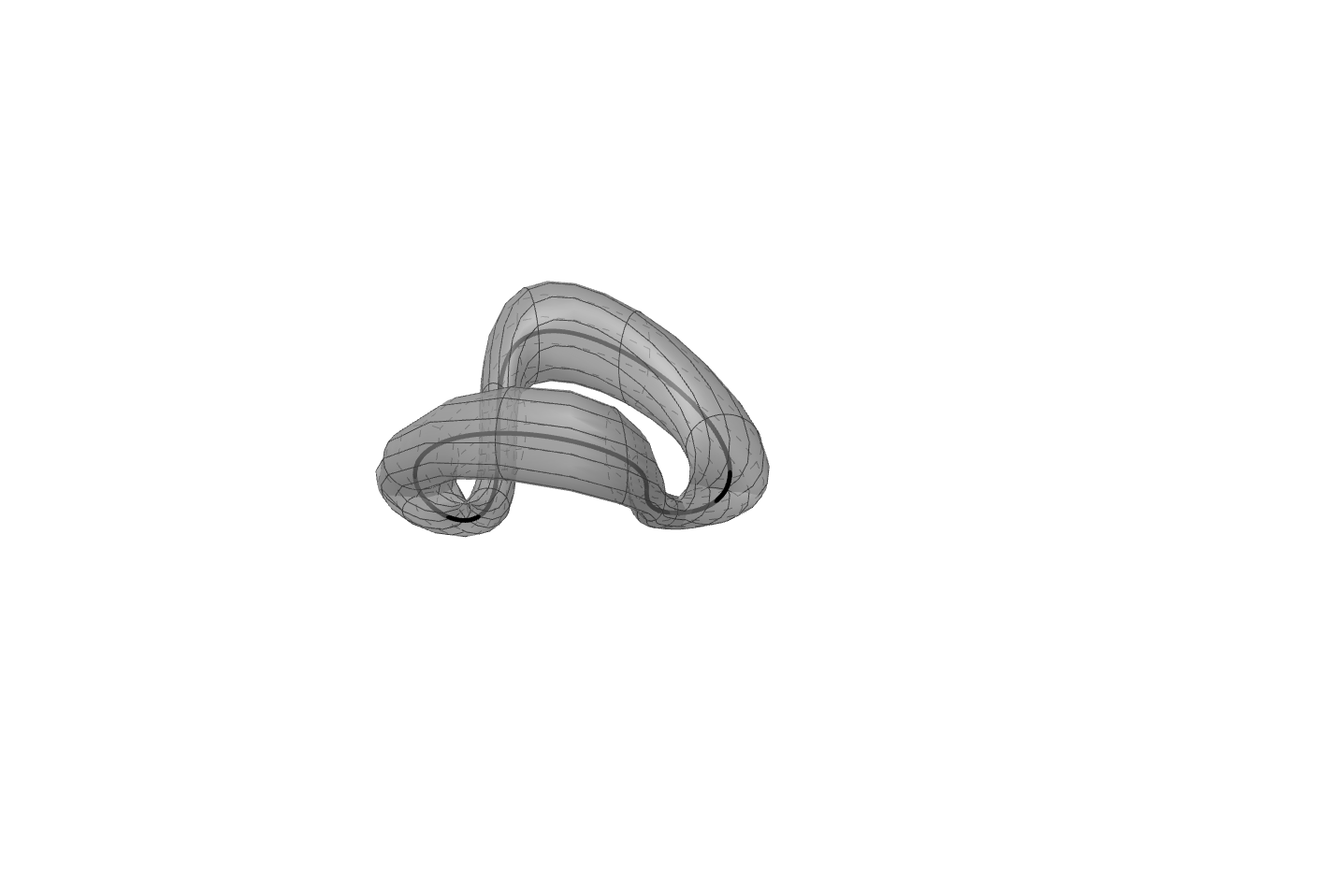}
\par\end{centering}

\caption{A periodic orbit with a tube around it.\label{fig:fig-6}}
\end{figure}

Suppose that the delay map of a signal obtained by projecting the
first component of a periodic orbit does not embed in $\mathbb{R}^{3}$.
We will show that the differential equation $\frac{dx}{dt}=f(x)$,
$x\in\mathbb{R}^{d}$, can be perturbed ever so slightly such that
a nearby periodic orbit of the perturbed equation results in an embedding
of the circle. The proof relies on constructing a tube around the
periodic orbit. A tube around a periodic orbit is illustrated in Figure
\ref{fig:fig-6}.

To construct a tube around any periodic orbit in $\mathbb{R}^{d}$,
we begin by defining $\mathcal{P}^{r}$ in analogy to $\mathcal{O}^{r}$.
Let $\mathcal{P}^{r}$ be the set of periodic orbits ${\bf p}:[0,T)\rightarrow\mathbb{R}^{d}$
that are $r$ times continuously differentiable. As before, we assume
that $[0,T)$ is a parametrization of $S^{1}$ and $T>0$ for the
period. As a part of the definition of $\mathcal{P}$, we require
$\frac{d{\bf p}}{dt}\neq0$ for $t\in[0,T)$. The set $\mathcal{P}^{r}$
is endowed with a topology by defining the metric $d_{r}$ in analogy
with (\ref{eq:cr-topology-or}):
\[
d_{r}(\mathbf{p},\mathbf{p}')=\sup_{k=0,\ldots r}\sup_{0\leq s<1}\norm{\mathbf{p}^{(k)}(sT)-\mathbf{p}'^{(k)}(sT')}+|T-T'|.
\]
The norm over $\mathbb{R}^{d}$ is the $2$-norm. The $k$th derivative
of $\mathbf{p}$ is denoted by $\mathbf{p}^{(k)}$. For convenience,
$\frac{d\mathbf{p}}{dt}$ and $\frac{d^{2}\mathbf{p}}{dt^{2}}$ are
also denoted as $\dot{\mathbf{p}}$ and $\ddot{\mathbf{p}}$, respectively.
The tangent vector at $t$ is defined as $\mathbf{s}(t)=\dot{\mathbf{p}}(t)/\norm{\dot{\mathbf{p}}(t)}$.

We denote the projection from $\mathbb{R}^{d}$ to the first coordinate
by $\pi_{1}$. If ${\bf p}$ is a solution of the dynamical system
$\frac{dx}{dt}=f(x)$, we wish to show that either $o(t)=\pi_{1}{\bf p}(t)$
is such that $t\rightarrow o(t;\tau)$ is an embedding of the circle
$[0,T)$ for some delay $\tau>0$, or that there exists an arbitrarily
close perturbed dynamical system $\frac{dx}{dt}=f'(x)$ with a nearby
periodic orbit ${\bf p}'$ such that $t\rightarrow o'(t;\tau)$ is
an embedding of the circle, if $o'=\pi_{1}\circ{\bf p}'$. 

To begin with, the signal $o(t)$ may even be identically zero. In
our proof, we use the results of the previous section to perturb it
to $o'(t)$ such that $t\rightarrow o'(t;\tau)$ is an embedding and
then show how to perturb the flow to realize $o'(t)$ as $\pi_{1}\circ{\bf p}'$.

The next lemma constructs a tube around the periodic orbit ${\bf p}$
in $\mathbb{R}^{d}$ (see Figure \ref{fig:fig-6}). That tube will
be used to perturb $f$ to $f'$. Known results in differential geometry
\cite{Foote1984,KrantzParks1981} may be used to assert the existence
of a tube. However, uniformity and smoothness guarantees that we need
could not be found in the literature. Therefore, an elementary proof
of the lemma is included. The proof will later be modified to deduce
the existence of a tube whose radius is uniform in a neighborhood
of $\mathbf{p}$. In the following lemma, $\delta$ may be thought
of as the radius of a tube around $\mathbf{p}$.
\begin{lem}
Suppose $\mathbf{p}\in\mathcal{P}^{r}$, $r\geq2$, and that its period
is $T>0$. Then there exists $\delta>0$ such that \label{lem:note1-lem11}
\begin{itemize}
\item $\norm{\dot{\mathbf{p}}(t)}^{2}-\delta\norm{\ddot{\mathbf{p}}}>\delta$
for $t\in[0,T),$
\item if $x\in\mathbb{R}^{d}$ and $\text{dist}(x,\mathbf{p})\leq\delta$,
there exists a unique $t\in[0,T)$ such that $\text{dist}(x,\mathbf{p})=\norm{x-\mathbf{p}(t)}.$
\end{itemize}
\end{lem}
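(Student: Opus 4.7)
The first bullet is immediate. Since $\mathbf{p}\in\mathcal{P}^{r}$ with $r\geq2$, both $\norm{\dot{\mathbf{p}}(t)}^{2}$ and $\norm{\ddot{\mathbf{p}}(t)}$ are continuous on the compact circle, so $m=\min_{t}\norm{\dot{\mathbf{p}}(t)}^{2}>0$ (using $\dot{\mathbf{p}}\neq 0$ from the definition of $\mathcal{P}^{r}$) and $M=\max_{t}\norm{\ddot{\mathbf{p}}(t)}<\infty$ both exist. Any $\delta<m/(M+1)$ yields $\norm{\dot{\mathbf{p}}(t)}^{2}-\delta\norm{\ddot{\mathbf{p}}(t)}\geq m-\delta M>\delta$. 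The existence of a nearest point in the second bullet is also automatic: the continuous function $t\mapsto\norm{x-\mathbf{p}(t)}^{2}$ attains a minimum on the compact circle.

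The substance is uniqueness of the nearest point, and the plan is to realize the tube as the diffeomorphic image of a normal-bundle map. Define $\Phi(t,v)=\mathbf{p}(t)+v$ on pairs $(t,v)$ with $v\perp\dot{\mathbf{p}}(t)$ and $\norm{v}\leq\delta$; the domain is the $\delta$-disk bundle of the normal bundle of the orbit, a $d$-dimensional manifold. At any $(t_{0},0)$ its tangent map sends the base direction $\partial_{t}$ to $\dot{\mathbf{p}}(t_{0})$ and is the identity on normal vectors; since $\dot{\mathbf{p}}(t_{0})\neq 0$, these span $\mathbb{R}^{d}$, so the inverse function theorem supplies a neighborhood of $(t_{0},0)$ on which $\Phi$ is a diffeomorphism. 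A standard compactness and Lebesgue-number argument then produces uniform constants $\eta_{0}>0$ and $\delta_{0}>0$ such that $\Phi$ is injective on every slab $(t_{0}-\eta_{0},t_{0}+\eta_{0})\times\{v\perp\dot{\mathbf{p}}(t_{0}):\norm{v}\leq\delta_{0}\}$.

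To pass from local slabs to a globally injective tube, I would use that $\mathbf{p}$ is injective on the circle (implicit in calling $\mathbf{p}$ a periodic orbit of minimal period $T$), so by compactness $c=\min_{|t_{1}-t_{2}|\geq\eta_{0}}\norm{\mathbf{p}(t_{1})-\mathbf{p}(t_{2})}>0$. If $\Phi(t_{1},v_{1})=\Phi(t_{2},v_{2})$ with $|t_{1}-t_{2}|\geq\eta_{0}$ on the circle, then $v_{2}-v_{1}=\mathbf{p}(t_{1})-\mathbf{p}(t_{2})$, forcing $\norm{v_{1}}+\norm{v_{2}}\geq c$. Choosing $\delta<\min(\delta_{0},c/2,m/(M+1))$ makes $\Phi$ globally injective on the $\delta$-tube. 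To conclude, if $\mathbf{p}(t^{*})$ is any closest point on the orbit to $x$, the first-order condition gives $(x-\mathbf{p}(t^{*}))\cdot\dot{\mathbf{p}}(t^{*})=0$, so $x=\Phi(t^{*},x-\mathbf{p}(t^{*}))$ with $\norm{x-\mathbf{p}(t^{*})}\leq\delta$; injectivity of $\Phi$ then forces $t^{*}$ to be unique.

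The main obstacle is the transition from local to global injectivity of $\Phi$: local injectivity is routine from the inverse function theorem, but gluing slabs into a globally injective tube requires genuine use of the fact that $\mathbf{p}$ is an \emph{injective} map from the circle into $\mathbb{R}^{d}$, since otherwise self-intersections of the orbit would immediately defeat uniqueness. The first bullet of the lemma plays no role in the uniqueness proof itself; I read it as a by-product recorded here because it supplies strict positivity of $d^{2}\norm{x-\mathbf{p}(t)}^{2}/dt^{2}$ throughout the tube, which will later be needed when one wants the closest-point projection to be smooth or wants to control how perturbations of the flow move nearby orbits through the tube.
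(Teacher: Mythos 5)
Your proof is correct, but it takes a genuinely different route from the paper's. You prove uniqueness of the nearest point by realizing the tube as the image of the normal disk bundle under $\Phi(t,v)=\mathbf{p}(t)+v$, getting local injectivity from the inverse function theorem, uniformizing by a compactness/Lebesgue-number argument, and handling well-separated parameters via $c=\min_{|t_1-t_2|\geq\eta_0}\norm{\mathbf{p}(t_1)-\mathbf{p}(t_2)}>0$; the first-order condition at a minimizer then places $x$ in the range of $\Phi$ and injectivity finishes the argument. This is essentially the tubular neighborhood theorem, and the paper deliberately avoids it: it remarks that the needed ``uniformity and smoothness guarantees\dots could not be found in the literature'' and instead gives an elementary quantitative proof, bounding $\abs{\mathbf{s}(t_m)\cdot(\mathbf{p}(t_2)-\mathbf{p}(t_1))}$ from below and $\abs{\mathbf{s}(t_m)\cdot\mathbf{w}_i}$ from above via the componentwise mean value theorem, so that every constant ($\mathfrak{m}$, $\mathfrak{M}$, $\mathfrak{M}^*$, $\Delta$) is an explicit function of $\min\norm{\dot{\mathbf{p}}}$, $\max\norm{\ddot{\mathbf{p}}}$, and the self-separation of the orbit. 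The payoff of the paper's approach comes in Lemma \ref{lem:note1-lem12}, where each constant is simply replaced by a factor-of-two perturbation to obtain a tube radius $\delta$ that is \emph{uniform} over all $\mathbf{p}'$ with $d_r(\mathbf{p},\mathbf{p}')\leq\epsilon$; your $\eta_0$ and $\delta_0$ emerge from an abstract compactness argument and are not visibly stable under such perturbations, so your route would require an extra parametrized-IFT or uniform-compactness step to deliver Lemma \ref{lem:note1-lem12}. Two minor points: both you and the paper rely on injectivity of $\mathbf{p}$ on the circle (justified because a periodic orbit of a flow cannot self-intersect, not by the bare definition of $\mathcal{P}^r$), and your reading of the first bullet as a separate ingredient needed later is accurate --- it is exactly what makes $\partial\mathfrak{f}/\partial t_0$ bounded away from zero in Lemma \ref{lem:note1-lem13}.
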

\begin{proof}
The proof is organized so as to be easy to uniformize in the next
lemma. 
\begin{enumerate}
\item \emph{Choice of $\mathfrak{m}$ and $\mathfrak{m^{\ast}}$. }Let $2\mathfrak{m}=\min_{t\in[0,T)}\norm{\dot{\mathbf{p}}(t)}>0$
and $\mathfrak{m}^{\ast}=\max_{t\in[0,T)}\norm{\ddot{\mathbf{p}}(t)}$.
The first part of the lemma would be satisfied if $4\mathfrak{m}^{2}-\delta\mathfrak{m}^{\ast}>\delta$,
or if $\delta<\frac{4\mathfrak{m}^{2}}{1+\mathfrak{m}^{\ast}}$.
\item \emph{Choice of $\mathfrak{M}$ and $\mathfrak{r}$. }First, we introduce
the notation 
\[
\frac{d\mathbf{p}}{dt}\Biggl|_{[t_{1},t_{2}]}
\]
for a vector each of whose components is the corresponding component
of $\dot{\mathbf{p}}$ evaluate at some $t\in[t_{1},t_{2}]$. Crucially,
each component may chose a different $t$. This notation will facilitate
application of the mean value theorem. The interval $[t_{1},t_{2}]$
may wrap around $[0,T)$, in which case the interval width must be
taken to be $T+t_{2}-t_{1}$ and not $t_{2}-t_{1}$. We ignore such
wrap-arounds from this point onwards.\\
Suppose $t_{1}<t_{2}$ and $t_{m}=\frac{t_{1}+t_{2}}{2}$. Then
\[
\Norm{\dot{\mathbf{p}}(t_{m})-\frac{d\mathbf{p}}{dt}\Biggl|_{[t_{1},t_{2}]}}\leq\max_{t\in[0,T)}\norm{\ddot{\mathbf{p}}}_{\infty}\sqrt{d}(t_{2}-t_{1}).
\]
The $\sqrt{d}$ factor here arises in converting a componentwise bound
using the $\infty$-norm to a bound on the $2$-norm. Evidently, if
we choose $\mathfrak{M}=\max_{t\in[0,T)}\norm{\ddot{\mathbf{p}}}_{\infty}\times\sqrt{d}$
and $\mathfrak{r}=\frac{\mathfrak{m}}{\mathfrak{M}}$, we may assert
that 
\begin{equation}
\Norm{\dot{\mathbf{p}}(t_{m})-\frac{d\mathbf{p}}{dt}\Biggl|_{[t_{1},t_{2}]}}\leq\mathfrak{m}\label{eq:lem11-e1}
\end{equation}
for $t_{1}<t_{2}$ and $t_{2}-t_{1}\leq\mathfrak{r}$.\\
If $\mathbf{s}(t_{m})$ is the unit tangent vector to $\mathbf{p}$
at $t_{m}$, we have 
\begin{align*}
\mathbf{s}(t_{m})\cdot\left(\mathbf{p}(t_{2})-\mathbf{p}(t_{1}\right) & =\mathbf{s}(t_{m})\cdot\left(\frac{d\mathbf{p}}{dt}\Bigl|_{[t_{1},t_{2}]}(t_{2}-t_{1})\right)\\
 & =\mathbf{s}(t_{m})\cdot\dot{\mathbf{p}}(t_{m})(t_{2}-t_{1})+\mathbf{s}(t_{m})\cdot\left(\frac{d\mathbf{p}}{dt}\Bigl|_{[t_{1},t_{2}]}-\dot{\mathbf{p}}(t_{m})\right)(t_{2}-t_{1}),
\end{align*}
where the first equality is obtained by applying the mean value theorem
to each component of $\mathbf{p}(t_{2})-\mathbf{p}(t_{1})$. Now,
$\mathbf{s}(t_{m})\cdot\dot{\mathbf{p}}(t_{m})=\norm{\dot{\mathbf{p}}(t_{m})}\geq2\mathfrak{m}$
by choice of $\mathfrak{m}$. By (\ref{eq:lem11-e1}), the second
term in the display above is at most $\mathfrak{m}(t_{2}-t_{1})$
in magnitude. Therefore, 
\[
\abs{\mathbf{s}(t_{m})\cdot\left(\mathbf{p}(t_{2})-\mathbf{p}(t_{1}\right)}\geq\mathfrak{m}(t_{2}-t_{1})
\]
for $t_{1}<t_{2}$ and $t_{2}-t_{1}\leq\mathfrak{r}$.
\item \emph{Choice of $\mathfrak{M}^{\ast}$. }Suppose $\mathbf{w}_{1}$
is a vector orthogonal to $\mathbf{s}(t_{1})$ and $t_{1}<t_{2}$
with $t_{m}=\frac{t_{1}+t_{2}}{2}$ as before. Then, we have $\mathbf{s}(t_{m})\cdot\mathbf{w}_{1}=\left(\mathbf{s}(t_{m})-\mathbf{s}(t_{1})\right)\cdot\mathbf{w}_{1}$,
which implies
\begin{align*}
\abs{\mathbf{s}(t_{m})\cdot\mathbf{w}_{1}} & \leq\norm{\mathbf{s}(t_{m})-\mathbf{s}(t_{1})}\,\norm{\mathbf{w}_{1}}\\
 & \leq\sqrt{d}\max_{t\in[0,T)}\norm{\dot{{\bf s}}(t)}_{\infty}(t_{m}-t_{1})\norm{\mathbf{w}_{1}},
\end{align*}
where the $\sqrt{d}$ factor arises in converting a componentwise
bound to a bound on the $2$-norm. An explicit formula for $\dot{\mathbf{s}},$
the time derivative of the unit tangent, will be given in the next
proof. If we choose $\mathfrak{M}^{\ast}=\sqrt{d}\max_{t\in[0,T)}\norm{\dot{{\bf s}}(t)}_{\infty}$,
we may replicate the argument given using $\mathbf{w}_{1},t_{1}$
with $\mathbf{w}_{2},t_{2}$ and assert
\[
\abs{\mathbf{s}(t_{m})\cdot\mathbf{w}_{1}}<\mathfrak{M}^{\ast}\norm{\mathbf{w}_{1}}(t_{2}-t_{1})\text{\quad and\quad}\abs{\mathbf{s}(t_{m})\cdot\mathbf{w}_{2}}<\mathfrak{M}^{\ast}\norm{\mathbf{w}_{2}}(t_{2}-t_{1}).
\]

\item \emph{Choice of $\Delta$. }We define $\Delta=\min_{\abs{t_{2}-t_{1}}\geq\mathfrak{r}}\norm{\mathbf{p}(t_{2})-\mathbf{p}(t_{1})}$.
Because a periodic orbit cannot self-intersect, we must have $\Delta>0$.
\end{enumerate}
We will choose $\delta$ to be smaller than the least of 
\[
\frac{4\mathfrak{m}^{2}}{1+\mathfrak{m}^{\ast}},\frac{\mathfrak{m}}{2\mathfrak{M}^{\ast}},\frac{\Delta}{2}.
\]
The first part of the lemma follows immediately. Now suppose $x\in\mathbb{R}^{d}$
and $\text{dist}(x,\mathbf{p})\leq\delta.$ Suppose $\text{dist}(x,\mathbf{p})$
is equal to $\norm{x-\mathbf{p}(t_{1})}$ as well as $\norm{x-\mathbf{p}(t_{2})}$
for $t_{1}<t_{2}$. By item 4 above, we must have $t_{2}-t_{1}<\mathfrak{r}$,
which we will now assume.

Because $t=t_{1}$ minimizes $(x-\mathbf{p}(t))\cdot(x-\mathbf{p}(t))$,
we may differentiate and deduce $(x-\mathbf{p}(t_{1}))\cdot\dot{\mathbf{p}}(t_{1})=0$.
Equivalently $(x-\mathbf{p}(t_{1}).\mathbf{s}(t_{1})=0$. Thus, we
may write $x=\mathbf{p}(t_{1})+\mathbf{w}_{1},$with $\mathbf{w}_{1}$
orthogonal to the tangent $\mathbf{s}(t_{1})$ and $\text{dist}(x,\mathbf{p})=\norm{\mathbf{w}_{1}}$.
Likewise, we may write $x=\mathbf{p}(t_{2})+\mathbf{w}_{2},$with
$\mathbf{w}_{2}$ orthogonal to the tangent $\mathbf{s}(t_{2})$ and
$\text{dist}(x,\mathbf{p})=\norm{\mathbf{w}_{2}}$.

From $\mathbf{p}(t_{1})+\mathbf{w}_{1}=\mathbf{p}(t_{2})+\mathbf{w}_{2}$,
we obtain
\[
\mathbf{s}(t_{m})\cdot\left(\mathbf{p}(t_{2})-\mathbf{p}(t_{1})\right)=\mathbf{s}(t_{m})\cdot\left(\mathbf{w}_{1}-\mathbf{w}_{2}\right).
\]
Taking absolute values, applying item 2 above to the left hand side,
and item 3 above to the right hand side, we get 
\[
\mathfrak{m}(t_{2}-t_{1})<\mathfrak{M}^{\ast}\left(\norm{\mathbf{w}_{1}}+\norm{\mathbf{w}_{2}}\right)(t_{2}-t_{1}),
\]
or $\text{dist}(x,\mathbf{p})>\frac{\mathfrak{m}}{2\mathfrak{M}^{\ast}}\geq\delta$,
contradicting our hypothesis about $x$. Thus, the assumption $t_{1}<t_{2}$
is mistaken, and we can only have $t_{1}=t_{2}$ proving the second
part of the lemma.
\end{proof}
The following lemma is a uniform version of the preceding Lemma \ref{lem:note1-lem11}.
The lemma allows us to construct a tube of radius $\delta$ around
all periodic orbits of period $T$ that are within a distance $\epsilon$
of $\mathbf{p}$. Its proof is a minor modification of the preceding
proof.
\begin{lem}
Suppose $\mathbf{p}\in\mathcal{P}^{r}$, $r\geq2$, and that its period
is $T>0$. Then there exist $\epsilon>0$ and $\delta>0$ such that
$\mathbf{p}'\in\mathcal{P}^{r}$, with the same period as $\mathbf{p}$,
and $d_{r}(\mathbf{p},\mathbf{p}')\leq\mbox{\ensuremath{\epsilon}}$
imply that\label{lem:note1-lem12}
\begin{itemize}
\item $\norm{\dot{\mathbf{p}}'(t)}^{2}-\delta\norm{\ddot{\mathbf{p}}'}>\delta$
for $t\in[0,T),$
\item if $x\in\mathbb{R}^{d}$ and $\text{dist}(x,\mathbf{p}')\leq\delta$,
then there exists a unique $t\in[0,T)$ such that $\text{dist}(x,\mathbf{p}')=\norm{x-\mathbf{p}'(t)}.$
\end{itemize}
\end{lem}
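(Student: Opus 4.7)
The plan is to mimic the proof of Lemma \ref{lem:note1-lem11} step by step, but replace each pointwise quantity evaluated on $\mathbf{p}$ with a uniform bound that remains valid for every $\mathbf{p}'$ with $d_r(\mathbf{p},\mathbf{p}')\leq\epsilon$. Since $r\geq 2$ and $\mathbf{p}$ and $\mathbf{p}'$ share the period $T$, the metric $d_r$ controls $\norm{\mathbf{p}-\mathbf{p}'}_\infty$, $\norm{\dot{\mathbf{p}}-\dot{\mathbf{p}}'}_\infty$, and $\norm{\ddot{\mathbf{p}}-\ddot{\mathbf{p}}'}_\infty$ simultaneously. The main obstacle to anticipate is item 4 in the previous proof, the choice of $\Delta$: the other constants arise from pointwise upper and lower bounds that transfer under small $C^2$ perturbations by continuity, whereas $\Delta$ is a global quantity depending on the absence of self-intersections.

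First, I would fix $\epsilon_1>0$ small enough that $d_r(\mathbf{p},\mathbf{p}')\leq\epsilon_1$ forces $\norm{\dot{\mathbf{p}}'(t)}\geq\mathfrak{m}$ and $\norm{\ddot{\mathbf{p}}'(t)}_\infty\leq 2\max_t\norm{\ddot{\mathbf{p}}(t)}_\infty$ uniformly in $t$. This immediately yields uniform analogues of $\mathfrak{m}^\prime,\mathfrak{m}^{*\prime},\mathfrak{M}^\prime$ from items 1 and 2 of the previous proof, and hence a uniform $\mathfrak{r}^\prime=\mathfrak{m}^\prime/\mathfrak{M}^\prime>0$ for which the mean value estimate \eqref{eq:lem11-e1} (with $\mathbf{p}$ replaced by $\mathbf{p}'$) continues to hold. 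For item 3, the tangent $\mathbf{s}'=\dot{\mathbf{p}}'/\norm{\dot{\mathbf{p}}'}$ has derivative
\[
\dot{\mathbf{s}}'=\frac{\ddot{\mathbf{p}}'}{\norm{\dot{\mathbf{p}}'}}-\frac{(\dot{\mathbf{p}}'\cdot\ddot{\mathbf{p}}')\,\dot{\mathbf{p}}'}{\norm{\dot{\mathbf{p}}'}^3},
\]
which depends continuously on $(\dot{\mathbf{p}}',\ddot{\mathbf{p}}')$ as long as $\norm{\dot{\mathbf{p}}'}$ is bounded away from zero. Consequently, $\max_t\norm{\dot{\mathbf{s}}'(t)}_\infty$ is bounded uniformly by some $\mathfrak{M}^{*\prime}$ that depends only on $\mathbf{p}$ and $\epsilon_1$.

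The one step requiring separate argument is item 4. For the original orbit $\mathbf{p}$, compactness of the set $\{(t_1,t_2):|t_1-t_2|\geq\mathfrak{r}^\prime\}\subset[0,T)\times[0,T)$ together with non-self-intersection gives a minimum $\Delta=\min\norm{\mathbf{p}(t_2)-\mathbf{p}(t_1)}>0$. For any $\mathbf{p}'$ with $d_r(\mathbf{p},\mathbf{p}')\leq\epsilon$, the triangle inequality gives
\[
\norm{\mathbf{p}'(t_2)-\mathbf{p}'(t_1)}\geq\norm{\mathbf{p}(t_2)-\mathbf{p}(t_1)}-2\epsilon\geq\Delta-2\epsilon,
\]
so shrinking $\epsilon$ to satisfy both $\epsilon\leq\epsilon_1$ and $\epsilon\leq\Delta/4$ gives the uniform analogue $\Delta^\prime\geq\Delta/2>0$ for every admissible $\mathbf{p}'$.

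Finally, I would set
\[
\delta=\tfrac{1}{2}\min\!\left(\frac{4(\mathfrak{m}^\prime)^2}{1+\mathfrak{m}^{*\prime}},\ \frac{\mathfrak{m}^\prime}{2\mathfrak{M}^{*\prime}},\ \frac{\Delta^\prime}{2}\right),
\]
which is now independent of which $\mathbf{p}'$ in the $\epsilon$-ball we chose. Running the final paragraphs of the proof of Lemma \ref{lem:note1-lem11} verbatim, with $\mathbf{p}$ replaced by $\mathbf{p}'$ and primed constants in place of unprimed ones, gives both conclusions: the first bullet from the choice $\delta<4(\mathfrak{m}^\prime)^2/(1+\mathfrak{m}^{*\prime})$, and uniqueness of the nearest point in the second bullet from a two-case argument (distinguishing $|t_2-t_1|\geq\mathfrak{r}^\prime$, excluded by $\delta<\Delta^\prime/2$, versus $|t_2-t_1|<\mathfrak{r}^\prime$, excluded by the estimate $\mathrm{dist}(x,\mathbf{p}')>\mathfrak{m}^\prime/(2\mathfrak{M}^{*\prime})$). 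The uniformity is automatic because every constant appearing in these inequalities was chosen to work simultaneously for the whole $\epsilon$-neighborhood.
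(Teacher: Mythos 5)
Your proposal is correct and follows essentially the same route as the paper: uniformize each of the constants $\mathfrak{m},\mathfrak{m}^{\ast},\mathfrak{M},\mathfrak{r},\mathfrak{M}^{\ast},\Delta$ over the $\epsilon$-ball (using the formula for $\dot{\mathbf{s}}'$ for the tangent bound and a perturbation argument for $\Delta$), then rerun the proof of Lemma \ref{lem:note1-lem11} verbatim with the primed constants. Your triangle-inequality treatment of $\Delta$ is in fact slightly more explicit than the paper's, which simply invokes continuity.
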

\begin{proof}
In the previous proof, we demonstrated the existence of a $\delta$
that works for $\mathbf{p}$. This proof comes down to choosing $\epsilon$
so that $\mathfrak{m},$$\mathfrak{m}^{\ast}$, $\mathfrak{M}$, $\mathfrak{r}$,
$\mathfrak{M}^{\ast}$, and $\Delta$ work for all $\mathbf{p}'$
with the same period as $\mathbf{p}$ and satisfying $d_{r}(\mathbf{p},\mathbf{p}')\leq\epsilon$.

The quantity $\mathfrak{m}$ is a lower bound on $\norm{\dot{\mathbf{p}}(t)}$.
Because $\epsilon$ controls $\norm{\dot{\mathbf{p}}(t)-\dot{\mathbf{p}}'(t)}$
over $t\in[0,T)$, we may assume $\epsilon$ small enough and replace
$\mathfrak{m}$ by $\mathfrak{m}/2$ to make it work for $\mathbf{p}'$.

The quantity $\mathfrak{m}^{\ast}$ is an upper bound on $\norm{\mathbf{\ddot{p}}(t)}$.
Because $\epsilon$ controls $\norm{\ddot{\mathbf{p}}(t)-\ddot{\mathbf{p}}'(t)}$
over $t\in[0,T)$, we may assume $\epsilon$ small enough and replace
$\mathfrak{m}^{\ast}$ by $2\mathfrak{m}^{\ast}$ to make it work
for $\mathbf{p}'$.

The quantity $\mathfrak{M}$ is essentially an upper bound on $\norm{\mathbf{\ddot{p}}(t)}_{\infty}$.
Because $\epsilon$ controls $\norm{\ddot{\mathbf{p}}(t)-\ddot{\mathbf{p}}'(t)}$
over $t\in[0,T)$, we may assume $\epsilon$ small enough and replace
$\mathfrak{M}$ by $2\mathfrak{M}$ to make it work for $\mathbf{p}'$.

We may use the same definition of $\mathfrak{r}=\frac{\mathfrak{m}}{\mathfrak{M}}$
after modifying $\mathfrak{m}$ and $\mathfrak{M}$ as above.

The quantity $\mathfrak{M}^{\ast}$ is essentially an upper bound
on $\norm{\dot{\mathbf{s}}(t)}_{\infty}$. The unit tangent vector
$\mathbf{s}$ is given by $\mathbf{s}=\dot{\mathbf{p}}/\left(\dot{\mathbf{p}}\cdot\dot{\mathbf{p}}\right)^{1/2}$.
Differentiating, we obtain
\[
\dot{\mathbf{s}}=\frac{\ddot{\mathbf{p}}}{\left(\dot{\mathbf{p}}\cdot\dot{\mathbf{p}}\right)^{1/2}}-\frac{\dot{\mathbf{p}}\left(\ddot{\mathbf{p}}\cdot\dot{\mathbf{p}}\right)}{\left(\dot{\mathbf{p}}\cdot\dot{\mathbf{p}}\right)^{3/2}}.
\]
Because $r\geq2$, we may control the variation in $\mathbf{p}$,
$\dot{\mathbf{p}}$, and $\ddot{\mathbf{p}}$ by making $\epsilon$
small. Thus, we may assume $\epsilon$ small enough and replace $\mathfrak{M}^{\ast}$
by $2\mathfrak{M^{\ast}}$ to make it work for $\mathbf{p}'$.

We begin by defining $\Delta=\min_{\abs{t_{2}-t_{1}}\geq\mathfrak{r}}\norm{\mathbf{p}(t_{2})-\mathbf{p}(t_{1})}$
as before. By assuming $\epsilon$ small enough and replacing $\Delta$
by $\Delta/2$, we may assume $\Delta$ to work for all $\mathbf{p}'$.

The rest of the proof of the previous lemma works without change.
\end{proof}
Half of the smoothness lemma that follows is a special case of the
main theorem in \cite{Foote1984}. Given a periodic orbit and a tube
around it, the lemma shows that each point in the tube can be expressed
as a sum of a point on the periodic orbit and a vector orthogonal
to the tangent at that point. Additionally, the lemma provides smoothness
and uniformity guarantees.
\begin{lem}
Assume the same setting as in the previous Lemma \ref{lem:note1-lem12}.
Given $\mathbf{p}'$ with $d_{r}(\mathbf{p},\mathbf{p}')\leq\epsilon$
and a point $x_{0}\in\mathbb{R}^{d}$ with $\text{dist}(x_{0},\mathbf{p}')\leq\delta$,
we may send $x_{0}\rightarrow t_{0}$, where $\mathbf{p}'(t_{0})$
is the unique point on $\mathbf{p}'$ closest to $x_{0}$, and $x_{0}\rightarrow\mathbf{w}_{0}$,
where $\mathbf{w}_{0}=x_{0}-\mathbf{p}'(t_{0})$. The functions $t_{0}(x_{0})$
and $\mathbf{w}_{0}(x_{0})$ are $C^{r-1}$. In addition, the magnitudes
of all derivatives of order $r-1$ or less have upper bounds that
depend only on $\mathbf{p}$ and $\delta$.\label{lem:note1-lem13}\end{lem}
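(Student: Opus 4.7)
The plan is to characterize $t_0(x_0)$ as the implicit solution of a single scalar equation and then invoke a quantitative version of the implicit function theorem, using the nondegeneracy bound supplied by the first bullet of Lemma \ref{lem:note1-lem12}. Specifically, the closest-point condition $(x_0-\mathbf{p}'(t_0))\cdot\dot{\mathbf{p}}'(t_0)=0$ from the interior of a critical-point optimization motivates defining
\[
F(x_0,t) \;=\; \bigl(x_0-\mathbf{p}'(t)\bigr)\cdot\dot{\mathbf{p}}'(t).
\]
Since $\mathbf{p}'\in\mathcal{P}^{r}$, the map $F$ is $C^{r-1}$ jointly in $(x_0,t)$ and affine (hence $C^\infty$) in $x_0$. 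The uniqueness asserted in Lemma \ref{lem:note1-lem12} tells me that on the tube $\{x_0:\mathrm{dist}(x_0,\mathbf{p}')\le\delta\}$ the equation $F(x_0,t)=0$ has exactly one solution $t_0\in[0,T)$, namely the closest-point parameter.

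Next I would verify that the partial $\partial_t F$ is uniformly bounded away from $0$ on the tube, which is the crucial hypothesis for the implicit function theorem. A direct computation gives
\[
\frac{\partial F}{\partial t}(x_0,t_0) \;=\; -\norm{\dot{\mathbf{p}}'(t_0)}^2 \;+\; \mathbf{w}_0\cdot\ddot{\mathbf{p}}'(t_0),
\]
where $\mathbf{w}_0 = x_0-\mathbf{p}'(t_0)$ satisfies $\norm{\mathbf{w}_0}\le\delta$. Cauchy--Schwarz and the first bullet of Lemma \ref{lem:note1-lem12} then yield
\[
\frac{\partial F}{\partial t}(x_0,t_0) \;\le\; -\norm{\dot{\mathbf{p}}'(t_0)}^2 + \delta\norm{\ddot{\mathbf{p}}'(t_0)} \;<\; -\delta,
\]
uniformly for every $\mathbf{p}'$ in the $\epsilon$-neighborhood of $\mathbf{p}$ and every $x_0$ in its tube. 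Applying the $C^{r-1}$ implicit function theorem produces $t_0(x_0)\in C^{r-1}$ in a neighborhood of each point of the tube; the global single-valued function $t_0:\{x_0:\mathrm{dist}(x_0,\mathbf{p}')\le\delta\}\to[0,T)$ is then $C^{r-1}$ by the uniqueness statement of Lemma \ref{lem:note1-lem12}. Consequently $\mathbf{w}_0(x_0)=x_0-\mathbf{p}'(t_0(x_0))$ is $C^{r-1}$ as well.

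The final task is the uniform bound on derivatives of order $\le r-1$. The plan is to differentiate the identity $F(x_0,t_0(x_0))\equiv 0$ repeatedly. The first derivative gives $\partial_{x_0}t_0 = -(\partial_t F)^{-1}\partial_{x_0}F$, and higher derivatives are obtained by Faà di Bruno / induction, producing polynomial expressions in the partial derivatives of $F$ (up to order $r-1$, hence in derivatives of $\mathbf{p}'$ up to order $r$) divided by powers of $\partial_t F$. Since $|\partial_t F|>\delta$ uniformly and $d_r(\mathbf{p},\mathbf{p}')\le\epsilon$ controls every derivative of $\mathbf{p}'$ of order $\le r$ in terms of $\mathbf{p}$, every such polynomial-in-derivatives-over-powers-of-$\partial_t F$ expression admits a bound depending only on $\mathbf{p}$ and $\delta$. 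The derivatives of $\mathbf{w}_0(x_0)=x_0-\mathbf{p}'(t_0(x_0))$ inherit the same type of bound through the chain rule. The main obstacle is bookkeeping: the implicit function theorem is standard, but writing down explicit, uniform bounds for all mixed partials of $t_0$ up to order $r-1$ requires an inductive argument that combines the uniform lower bound $|\partial_t F|>\delta$ with the uniform control of $\mathbf{p}'$-derivatives granted by $d_r(\mathbf{p},\mathbf{p}')\le\epsilon$, exactly the kind of uniformization already carried out in Lemma \ref{lem:note1-lem12}.
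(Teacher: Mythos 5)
Your proposal is correct and follows essentially the same route as the paper: the same implicit equation $(x_{0}-\mathbf{p}'(t))\cdot\dot{\mathbf{p}}'(t)=0$, the same lower bound $|\partial_{t}F|>\delta$ obtained from the first bullet of Lemma \ref{lem:note1-lem12} via Cauchy--Schwarz, the implicit function theorem for $C^{r-1}$ regularity, and repeated implicit differentiation for the uniform derivative bounds.
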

\begin{proof}
It is sufficient to prove the lemma for $t_{0}(x_{0})$. The assertions
about $\mathbf{w}_{0}(x_{0})$ follow easily from that point. 

The function $(x_{0}-\mathbf{p}'(t))\cdot(x_{0}-\mathbf{p}'(t))$
has a unique minimum at $t=t_{0}$. By differentiating, we get the
equation $(x_{0}-\mathbf{p}'(t_{0})).\frac{d{\bf p}'(t_{0})}{dt}=0$.
If we define 
\[
\mathfrak{f}(x_{0},t_{0})=(x_{0}-\mathbf{p}'(t_{0})).\frac{d{\bf p}'(t_{0})}{dt}
\]
We may think of the equation $\mathfrak{f}(x_{0},t_{0})=0$ as implicitly
defining $t_{0}(x_{0})$ as a function of $x_{0}$. We have 
\[
\frac{\partial\mathfrak{f}}{\partial t_{0}}=\ddot{\mathbf{p}}'(t_{0})\cdot\left(x_{0}-\mathbf{p}'(t_{0})\right)-\frac{d{\bf p}'(t_{0})}{dt}\cdot\frac{d{\bf p}'(t_{0})}{dt}.
\]
Here $\norm{x_{0}-\mathbf{p}'(t_{0})}=\text{dist}(x_{0},\mathbf{p}')\leq\delta$.
We may use the first part of Lemma \ref{lem:note1-lem12} and conclude
that the partial derivative $\partial\mathfrak{f}/\partial t_{0}$
is greater than $\delta$ in magnitude. 

Thus, the $C^{r-1}$ smoothness of $t_{0}(x_{0})$ follows by the
implicit function theorem. To upper bound the magnitudes of the derivatives,
we simply have to use chain rule and implicit differentiation. For
example, if $x_{0}=(\xi_{1},\ldots,\xi_{d})$, we have 
\begin{equation}
\frac{\partial t_{0}}{\partial\xi_{1}}=-\frac{\mathbf{e}_{1}\cdot\frac{d{\bf p}'(t_{0})}{dt}}{\frac{\partial\mathfrak{f}}{\partial t_{0}}},\label{eq:lem13-e1}
\end{equation}
 where $\mathbf{e}_{1}=(1,0,\ldots,0)$. Now the denominator is $\delta$
or more in magnitude and the magnitude of the numerator has an upper
bound that depends only on $\mathbf{p}'$.

To obtain bounds for derivatives of $t_{0}(x_{0})$ of order $r-1$
or less, we may repeatedly differentiate (\ref{eq:lem13-e1}). The
bounds on the derivatives obtained in this manner depend only on the
first $r$ derivatives of $\mathbf{p}'$ and $\delta$. If we assume
$\epsilon<1$, we may bound the first $r$ derivatives of $\mathbf{p}'$
in terms of the derivatives of $\mathbf{p}$. Thus, the magnitudes
of all derivatives of order $r-1$ or less have upper bounds that
depend only on $\mathbf{p}$ and $\delta$.\end{proof}
\begin{thm}
Let ${\bf p}(t)$ be a periodic solution of the dynamical system $d{\bf x}/dt=f({\bf x})$,
where $f$ is $C^{r-1}$. If $o(t)=\pi_{1}{\bf p}(t)$ is a periodic
signal, there exists either a delay $\tau>0$ such that $t\rightarrow o(t;\tau)$,
$0\leq t<T$, is an embedding of the circle $[0,T)$ or another vector
field $f'$, arbitrarily close to $f$ in the $C^{r-1}$ topology,
with a periodic solution ${\bf p}'(t)$ arbitrarily close to ${\bf p}(t)$
in $\mathcal{P}^{r}$ and of the same period such that $t\rightarrow\pi_{1}{\bf p}'(t;\tau)$
is an embedding of the circle $[0,T)$ for some $\tau>0$.\label{thm:note1-thm14}\end{thm}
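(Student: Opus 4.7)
The plan is to combine Theorem \ref{thm:note1-thm8} with a tube-supported perturbation of the vector field. By Theorem \ref{thm:note1-thm8}, there exists $o' \in \mathcal{O}^r$ of the same period $T$ as $o$, with $d_r(o,o')$ arbitrarily small, such that $t \mapsto o'(t;\tau)$ is an embedding of $[0,T)$ in $\mathbb{R}^3$ for some $\tau>0$. Set $\eta(t) = o'(t) - o(t)$ and define the candidate perturbed orbit
\[
\mathbf{p}'(t) \;=\; \mathbf{p}(t) + \eta(t)\,\mathbf{e}_1,
\]
a $T$-periodic curve whose first coordinate equals $o'(t)$. For $\norm{\eta}_r$ sufficiently small, $\dot{\mathbf{p}}'\neq 0$ so $\mathbf{p}' \in \mathcal{P}^r$ and is $C^r$-close to $\mathbf{p}$. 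All that remains is to produce a $C^{r-1}$ vector field $f'$, $C^{r-1}$-close to $f$, for which $\mathbf{p}'$ is a periodic solution.

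The field $f'$ is constructed as $f' = f + h$, where $h$ is a $C^{r-1}$ perturbation supported in a tube around $\mathbf{p}'$. Applying Lemmas \ref{lem:note1-lem12}--\ref{lem:note1-lem13} to $\mathbf{p}'$ (which lies within the $\epsilon$-neighborhood of $\mathbf{p}$ once $\norm{\eta}_r$ is small) gives a fixed radius $\delta > 0$ and $C^{r-1}$ tube coordinates $x = \mathbf{p}'(t_0(x)) + \mathbf{w}_0(x)$ on $\{x:\mathrm{dist}(x,\mathbf{p}')\le\delta\}$ whose derivatives of order at most $r-1$ are bounded uniformly in $\eta$. Let $G(t) = \dot{\mathbf{p}}'(t) - f(\mathbf{p}'(t))$ be the defect of $\mathbf{p}'$ as a solution of $\dot x = f(x)$, and fix a $C^\infty$ bump function $\lambda\colon \mathbb{R}\to[0,1]$ with $\lambda(0)=1$ and support in $(-1,1)$. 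Define
\[
h(x) \;=\; \lambda\!\left(\norm{\mathbf{w}_0(x)}^2/\delta^2\right)\,G(t_0(x))
\]
for $x$ in the tube and $h(x)=0$ elsewhere. Then $h$ is $C^{r-1}$, $h(\mathbf{p}'(t)) = G(t)$ (since $\mathbf{w}_0(\mathbf{p}'(t)) = 0$ and $\lambda(0)=1$), and therefore $\dot{\mathbf{p}}'(t) = f(\mathbf{p}'(t)) + h(\mathbf{p}'(t)) = f'(\mathbf{p}'(t))$. Thus $\mathbf{p}'$ is a periodic solution of $\dot x = f'(x)$ of period $T$, and its first-coordinate signal $o'$ gives the required embedding.

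The main obstacle is to verify that $\norm{f'-f}_{C^{r-1}} = \norm{h}_{C^{r-1}}$ can be made arbitrarily small by shrinking $\norm{\eta}_r$. Using $\dot{\mathbf{p}} = f(\mathbf{p})$, we rewrite
\[
G(t) \;=\; \dot\eta(t)\,\mathbf{e}_1 - \bigl[f(\mathbf{p}(t)+\eta(t)\mathbf{e}_1) - f(\mathbf{p}(t))\bigr],
\]
so a Taylor expansion of $f$ around $\mathbf{p}(t)$ combined with the chain rule yields $\norm{G}_{C^{r-1}} \le C_1 \norm{\eta}_r$, where $C_1$ depends only on $\mathbf{p}$ and $\norm{f}_{C^{r-1}}$. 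Because $\delta$ is fixed independently of $\eta$, the bump factor $\lambda(\norm{\mathbf{w}_0}^2/\delta^2)$ has bounded $C^{r-1}$ norm, and the composition $G\circ t_0$ has $C^{r-1}$ norm controlled by $\norm{G}_{C^{r-1}}$ times universal constants (by the uniform derivative bounds in Lemma \ref{lem:note1-lem13}). Combining these via the product rule gives $\norm{h}_{C^{r-1}} \le C_2 \norm{\eta}_r$ with $C_2$ independent of $\eta$, so taking $\norm{\eta}_r$ sufficiently small makes $f'$ as $C^{r-1}$-close to $f$ as desired and completes the construction.
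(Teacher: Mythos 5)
Your proposal is correct and follows essentially the same route as the paper's proof: the paper also sets $\mathbf{p}'(t)=\mathbf{p}(t)+(o'(t)-o(t))\mathbf{e}_1$, writes the defect $\dot{\mathbf{p}}'-f(\mathbf{p}')$ as a sum of two small terms, and defines $f'=f+\delta f$ with $\delta f$ given by exactly your bump-function formula in the tube coordinates of Lemmas \ref{lem:note1-lem12}--\ref{lem:note1-lem13}. Your explicit $C^{r-1}$ estimate on $h$ and the remark that $\dot{\mathbf{p}}'\neq 0$ for small $\eta$ are slightly more detailed than the paper's corresponding steps, but the argument is the same.
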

\begin{proof}
Let $o(t)=\pi_{1}{\bf p}(t)$ and assume that there is no delay $\tau>0$
such that $t\rightarrow o(t;\tau)$ is an embedding. By Lemma \ref{lem:note1-lem6},
we can find a periodic signal $o'(t)$ of period $T$, and arbitrarily
close to $o(t)$ in $\mathcal{O}^{r}$, such that $t\rightarrow o'(t;\tau)$
for some $\tau>0$. Define 
\begin{equation}
{\bf p}'(t)={\bf p}(t)+\left(\begin{array}{c}
o'(t)-o(t)\\
0\\
\vdots
\end{array}\right).\label{eq:thm13-e1}
\end{equation}
It suffices to construct a vector field $f'$ such that ${\bf p}'(t)$
is a periodic solution of $\frac{dx}{dt}=f'(x)$ and $f'\rightarrow f$
as ${\bf p}'\rightarrow{\bf p}$.

Using Lemmas \ref{lem:note1-lem11} and \ref{lem:note1-lem12}, find
an $\epsilon>0$ and a $\delta>0$, such that a $\delta$-tube may
be constructed as in the lemma for all periodic orbits $\mathbf{p}'$
of the same period as $\mathbf{p}$ satisfying $d_{r}(\mathbf{p},\mathbf{p}')<\epsilon$.
In addition, by taking $o'$ close enough to $o$, we may assume that
$d_{r}(\mathbf{p},\mathbf{p}')<\epsilon$.

The following calculation is the heart of the proof: 
\begin{eqnarray*}
\frac{d{\bf p}'(t)}{dt} & = & \frac{d{\bf p}(t)}{dt}+\epsilon_{1}(t)\\
 & = & f({\bf p}(t))+\epsilon_{1}(t)\\
 & = & f({\bf p}'(t))+\epsilon_{1}(t)+\epsilon_{2}(t),
\end{eqnarray*}
where 
\[
\epsilon_{1}(t)=\left(\begin{array}{c}
\frac{d(o'(t)-o(t))}{dt}\\
0\\
\vdots
\end{array}\right)
\]
and $\epsilon_{2}(t)=f({\bf p}(t))-f({\bf p}'(t))$. Evidently, as
$o'\rightarrow o$ in $\mathcal{O}^{r}$, the periodic signals $\epsilon_{1}(t)$
and $\epsilon_{2}(t)$ go to $0$ in $\mathcal{O}^{r-1}$.

Let $\lambda:\mathbb{R}\rightarrow\mathbb{R}$ be a $C^{\infty}$
bump function with $\lambda(x)=1$ for $|x|\leq1/2$ and $\lambda(x)=0$
for $|x|\geq3/4$.\lyxdeleted{Divakar Viswanath,,,}{Sun May  6 22:11:07 2018}{
} Suppose $x_{0}$ is a point in the $\delta$-tube around $\mathbf{p}'$.
Then Lemma \ref{lem:note1-lem13}, allows us to write $x_{0}$ as
$x_{0}=\mathbf{p}'(t_{0}(x_{0}))+\mathbf{w}_{0}(x_{0})$. The perturbation
$\delta f:\mathbb{R}^{d}\rightarrow\mathbb{R}^{d}$ is defined as
\[
\delta f(x_{0})=(\epsilon_{1}(t_{0}(x_{0}))+\epsilon_{2}(t_{0}(x_{0})))\lambda\left(\frac{\mathbf{w}_{0}(x_{0}).\mathbf{w}_{0}(x_{0})}{\delta^{2}}\right)
\]
for $x_{0}$ in the $\delta$-tube around ${\bf p}'$, and zero otherwise.
As a consequence of Lemma \ref{lem:note1-lem13}, $\delta f\rightarrow0$
in the $C^{r-1}$ sense as $o'\rightarrow o$. 

By construction, ${\bf p}'(t)$ is a periodic solution of the dynamical
system $dx/dt=f'(x)$, with $f'=f+\delta f$.
\end{proof}
Finally, as a consequence of Proposition \ref{prop:note1-prop10}
and Theorem \ref{thm:note1-thm14}, we have the following theorem.
\begin{thm}
Let $\frac{dx}{dt}=f(x)$, where $x\in\mathbb{R}^{d}$, $f:U\rightarrow\mathbb{R}^{d}$,
and $U$ an open subset of $\mathbb{R}^{d}$, be a $C^{r}$, $r\geq2$,
dynamical system. Let $\mathbf{a}\in\mathbb{R^{d}}$ be a nonzero
vector. Let $\mathbf{p}:[0,T)\rightarrow U$ be a hyperbolic periodic
solution of period $T>0$. There exists an open neighborhood of $f$
in the $C^{r-1}$ topology such that for an open and dense set of
$g$ in that neighborhood admit a nearby hyperbolic periodic solution
$\mathbf{p}'(t)$ of $dx'/dt=g(x')$ of period $T'$ and a delay $\tau'>0$
such that the delay map $t\rightarrow(\mathbf{a}\cdot\mathbf{p}'(t),\mathbf{a}\cdot\mathbf{p}'(t-\tau'),\mathbf{a}\cdot\mathbf{p}'(t-2\tau'))$
is an embedding of the circle $[0,T')$ in $\mathbb{R}^{3}$.\label{thm:note1-thm15}\end{thm}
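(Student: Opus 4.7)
My plan is to assemble this theorem from the two main ingredients already established: Proposition \ref{prop:note1-prop10}, which supplies the openness half, and Theorem \ref{thm:note1-thm14}, which supplies the density half, together with the standard persistence theory for hyperbolic periodic orbits. As a preliminary reduction I would apply a fixed linear isomorphism $L$ of $\mathbb{R}^{d}$ sending $\mathbf{a}$ to $(1,0,\ldots,0)$. The change of variables $y=Lx$ conjugates $f$ to $\tilde{f}(y)=Lf(L^{-1}y)$ and $\mathbf{p}$ to $L\mathbf{p}$; this conjugation is a bijection of vector fields that preserves periods, preserves hyperbolicity, and is a homeomorphism in the $C^{r-1}$ topology. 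Hence without loss of generality I may assume $\mathbf{a}\cdot\mathbf{x}=\pi_{1}\mathbf{x}$, placing us in exactly the setting of Theorem \ref{thm:note1-thm14}.

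Next I would invoke the standard fact that hyperbolic periodic orbits persist under small $C^{r-1}$ perturbations of the vector field: there exists an open neighborhood $\mathcal{N}$ of $f$ in the $C^{r-1}$ topology such that every $g\in\mathcal{N}$ has a unique nearby hyperbolic periodic solution $\mathbf{p}_{g}$, with period $T_{g}$ and with $\mathbf{p}_{g}$ depending continuously on $g$ in $\mathcal{P}^{r}$. This gives a well-defined map $g\mapsto\mathbf{p}_{g}$ on $\mathcal{N}$ and lets us speak coherently about the delay embedding property across different $g$. I would then define $\mathcal{E}\subset\mathcal{N}$ to be the subset of those $g$ for which, for some $\tau'>0$, the delay map $t\mapsto\pi_{1}\mathbf{p}_{g}(t;\tau')$ is an embedding of the circle $[0,T_{g})$.

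For openness of $\mathcal{E}$ in $\mathcal{N}$: pick any $g_{0}\in\mathcal{E}$ and apply Proposition \ref{prop:note1-prop10} with base vector field $g_{0}$ and hyperbolic periodic orbit $\mathbf{p}_{g_{0}}$; this directly yields an open $C^{r-1}$ neighborhood of $g_{0}$ inside which the delay embedding property persists, so $\mathcal{E}$ is open. For density of $\mathcal{E}$ in $\mathcal{N}$: fix an arbitrary $g_{0}\in\mathcal{N}$ and apply Theorem \ref{thm:note1-thm14} to the pair $(g_{0},\mathbf{p}_{g_{0}})$. That theorem produces a vector field $g_{0}'$ arbitrarily $C^{r-1}$-close to $g_{0}$ and a periodic orbit $\mathbf{p}'$ arbitrarily $\mathcal{P}^{r}$-close to $\mathbf{p}_{g_{0}}$, of the same period, whose delay signal embeds in $\mathbb{R}^{3}$. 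By uniqueness of the perturbed hyperbolic orbit, $\mathbf{p}'$ must coincide with $\mathbf{p}_{g_{0}'}$, so $g_{0}'\in\mathcal{E}$.

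The one point requiring care is verifying that the perturbation from Theorem \ref{thm:note1-thm14} does not destroy hyperbolicity and does not escape $\mathcal{N}$. Hyperbolicity is preserved because the eigenvalues of the monodromy operator depend continuously on the vector field in the $C^{1}$ topology, and we are free to shrink $\|g_{0}'-g_{0}\|_{C^{r-1}}$; staying inside $\mathcal{N}$ is arranged by shrinking $\mathcal{N}$ once at the start so that a definite $C^{r-1}$ ball around every $g_{0}\in\mathcal{N}$ still consists of vector fields admitting a nearby hyperbolic periodic orbit. Once $\mathcal{E}$ is shown to be open and dense in $\mathcal{N}$, the theorem is immediate.
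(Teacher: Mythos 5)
Your proposal is correct and follows essentially the same route as the paper, which likewise reduces to $\mathbf{a}=(1,0,\ldots,0)$ by a linear change of variables and then combines Proposition \ref{prop:note1-prop10} (openness) with Theorem \ref{thm:note1-thm14} (density). The paper states this in two sentences; your version usefully spells out the persistence of the hyperbolic orbit, the uniqueness argument identifying the orbit from Theorem \ref{thm:note1-thm14} with the continued orbit $\mathbf{p}_{g_{0}'}$, and the preservation of hyperbolicity, all of which are consistent with the paper's intent.
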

\begin{proof}
Proposition \ref{prop:note1-prop10} and Theorem \ref{thm:note1-thm14}
imply Theorem \ref{thm:note1-thm15} with $\mathbf{a}=(1,0,\ldots,0)$.
The theorem may be reduced to that case for any $\mathbf{a}\neq0$
by a linear change of variables.
\end{proof}
The theorem does not assert that periodic orbits can be embedded in
$\mathbb{R}^{3}$ for an open and dense set of $C^{r}$ vector fields
$g$. Instead, the theorem limits itself to a neighborhood of a vector
field $f$ which is known to admit a hyperbolic periodic orbit. Such
a restriction is essential because there exist open sets of vector
fields none of which admit any periodic solution.

\section{Discussion}

In this paper, we have considered an extension of the delay coordinate
embedding theory. The current embedding theory of Sauer et al \cite{SauerYorkeCasdagli1991}
is based on fixing the dynamical system and perturbing the observation
function. We have obtained an embedding theorem for periodic orbits
that fixes the observation function but perturbs the dynamical system.

Periodic solutions are a special case that arise in applications \cite{Borgers2017,Forger17}.
However, a generalization to a broader setting is desirable both from
the theoretical point of view as well as for wider applicability. 

Our approach in this paper relies heavily on the periodicity of signals.
Yet some differences between our approach and that of Sauer et al
may be pertinent to more general settings. The approach of Sauer et
al is able to handle aspects of the embedding result, such as injectivity,
immersivity, and distinct points on the same periodic orbit, relatively
independently. Our argument is more layered. A global argument is
structured above a local argument, and the argument for periodic orbits
relies on the argument for periodic signals.

\bibliographystyle{plain}
\bibliography{references}

\end{document}